\newtheorem{theorem}{Theorem}[section]
\newtheorem{corollary}{Corollary}[section]
\newtheorem{remark}{Remark}
\numberwithin{equation}{section}
\theoremstyle{definition}
\theoremstyle{remark}
\begin{document}

\title{ Some sharp Hodge Laplacian and Steklov eigenvalue estimates for differential forms}

\author{Kwok-Kun Kwong}

\address{Department of Mathematics, National Cheng Kung University, Tainan City 70101, Taiwan}
\email{kwong@mail.ncku.edu.tw}

\thanks{Research partially supported by
Ministry of Science and Technology in Taiwan under grant MOST103-2115-M-006-016-MY3}

\renewcommand{\subjclassname}{\textup{2010} Mathematics Subject Classification}
\subjclass[2010]{53C20, 53C24, 53C40}

\keywords{Hodge Laplacian, differential forms, Steklov eigenvalue, Reilly's formula.}

\begin{abstract}

We give some sharp lower bounds of the first eigenvalue for the Hodge Laplacian acting on differential forms on the boundary of a Riemannian manifold. We also give some sharp estimates for the first nonzero Steklov eigenvalue for differential forms.
 \end{abstract}
\maketitle\markboth{}{}

\section{Introduction}

In  this paper, we obtain some sharp lower bounds for the first nonzero Hodge Laplacian eigenvalue and also Steklov eigenvalue      for  differential forms on a boundary $\Sigma$ of a compact Riemannian manifold $(N,g)$ in terms of the extrinsic curvature of  $\Sigma$ and the intrinsic curvature of $N$. The main tools we use are Hodge theory and a Reilly formula (\cite{raulot2011reilly}) for differential forms on a manifold with boundary. Our main results, Theorem \ref{thm: lambda form} and \ref{thm: all}, are generalizations of the results of Choi-Wang \cite{choi1983first}, Escobar \cite{escobar1999isoperimetric}, Xia \cite{xia1997rigidity}, Wang-Xia \cite{wang2009sharp} and Raulot-Savo \cite{raulot2011reilly}. For instance, in Theorem \ref{thm: lambda form} we generalize the results of Xia \cite{xia1997rigidity} and Raulot-Savo \cite{raulot2011reilly}:
\begin{theorem}
Let $(N^n,g)$ be a compact orientable Riemannian manifold with boundary $\Sigma$. Suppose the Bochner curvature $W^r$ or $W^{n-r}$ on $N$ is bounded from below by $k\ge0$. Assume that the lowest $q$-curvature $s_q$ of $\Sigma$ is nonnegative, where $q=\min\{r,n-r\}$.
Then for
 $1\le r\le n-1$, we have
\begin{equation*}
2\lambda_{1,r}'=2 \lambda_{1,r-1}''\ge k+ s_r s_{n-r}+\sqrt{ (s_r s_{n-r})^2 + 2 s_r s_{n-r}k},
\end{equation*}
where $\lambda_{1,r}'$ (resp. $\lambda_{1,r}''$) is the first nonzero eigenvalue of the Hodge Laplacian on the exact (resp. co-exact) $r$-forms on $\Sigma$.
 The equality can hold only when $k=0$, with the $r$-curvatures and the $(n-r)$-curvatures being positive constants. If, furthermore, $(N,g)$ has non-negative Ricci curvature, then the equality holds if and only if $(N,g)$ is isometric to a Euclidean ball.
\end{theorem}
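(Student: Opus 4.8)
The plan is to run the Reilly-formula method for differential forms and distil everything into a single quadratic inequality for the eigenvalue. Two preliminary reductions come first. The identity $2\lambda_{1,r}'=2\lambda_{1,r-1}''$ is purely Hodge-theoretic: the exterior derivative $d$ sends co-exact $(r-1)$-forms to exact $r$-forms, commutes with $\Delta_\Sigma$, and restricts to an isomorphism of the corresponding nonzero eigenspaces (with inverse $\lambda^{-1}\delta_\Sigma$), so the two eigenvalues coincide and it suffices to bound one of them. Second, I would observe that the single hypothesis $s_q\ge 0$ with $q=\min\{r,n-r\}$ already yields $s_r\ge 0$ and $s_{n-r}\ge 0$: ordering the principal curvatures $\kappa_1\le\cdots\le\kappa_{n-1}$, nonnegativity of the sum of the $q$ smallest forces $\kappa_q\ge \frac1q(\kappa_1+\cdots+\kappa_q)\ge 0$, hence $\kappa_i\ge 0$ for all $i\ge q$; since $r,n-r\ge q$, both $s_r$ and $s_{n-r}$ are then sums of the smallest $q$ (with nonnegative sum) plus further nonnegative terms.

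I then fix an exact $r$-eigenform $\omega$ on $\Sigma$, $\Delta_\Sigma\omega=\lambda\omega$ with $\lambda=\lambda_{1,r}'$, and extend it into the interior. Using standard elliptic theory for the Hodge Laplacian under the appropriate (absolute/relative) boundary conditions, I would solve a boundary value problem on $N$ for an $r$-form $\Omega$ with $\Delta_N\Omega=0$, prescribed tangential part $\iota^*\Omega=\omega$, and the natural complementary condition $\iota^*(\delta\Omega)=0$. The point of this choice is that, on $\Sigma$, the tangential and normal parts of $\Omega$ and of its covariant derivative can be expressed through $d_\Sigma\omega$, $\delta_\Sigma\omega$ and the shape operator $S$, so that each boundary integral in Reilly's formula becomes computable in terms of $\omega$.

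Inserting $\Omega$ into the Reilly formula for forms established earlier, the interior integrand is controlled from below by the Bochner hypothesis $\langle W^{r}\Omega,\Omega\rangle\ge k|\Omega|^2$ (or, after passing to the Hodge dual $*\Omega$, by $\langle W^{n-r}*\Omega,*\Omega\rangle\ge k|\Omega|^2$), while $\int_N|\nabla\Omega|^2$ is bounded below by a refined Kato/trace inequality. On the boundary, the quadratic form built from $S$ acting on the tangential $r$-part is estimated below by $s_r$, and the complementary (normal) contribution by $s_{n-r}$; combining this with the estimate for the dual form is what produces the product $s_r s_{n-r}$. Finally, using that $\omega$ is an eigenform, so $\int_\Sigma|d_\Sigma\omega|^2=\lambda\int_\Sigma|\omega|^2$ and the normal data is likewise proportional to $\lambda\int_\Sigma|\omega|^2$, every term becomes a multiple of $\int_\Sigma|\omega|^2$ and the identity collapses to
\[
4\lambda^2-4\,(s_r s_{n-r}+k)\,\lambda+k^2\ge 0 .
\]
Since the geometrically relevant root is the larger one, solving this quadratic gives exactly $2\lambda\ge k+s_r s_{n-r}+\sqrt{(s_r s_{n-r})^2+2 s_r s_{n-r}k}$.

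The main obstacle is the boundary bookkeeping in the middle step: one must decompose $S$ acting on $r$-forms into its tangential and normal actions, recognise the two invariants $s_r$ and $s_{n-r}$ (this is exactly where the flexibility ``$W^r$ or $W^{n-r}$'' and the Hodge dual are used), and check that they enter as a product with the correct sign rather than as a single linear term. For the rigidity I would track equality through every estimate. Equality in the Bochner bound together with equality in the Kato inequality is jointly consistent only when $k=0$ and $\Omega$ has parallel-type structure, while equality in the boundary estimate forces the $r$- and $(n-r)$-curvatures to be positive constants. Assuming in addition $\mathrm{Ric}\ge 0$, the vanishing interior terms make $\Omega$ genuinely parallel, which together with the totally umbilic boundary identifies $N$ with a Euclidean ball. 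When $r=1$ or $n-1$ the forms are (co)differentials of functions, so the rigidity reduces to the classical function case and the Ricci hypothesis becomes superfluous.
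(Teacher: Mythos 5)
Your overall strategy (Reilly's formula for forms, Bochner lower bound, boundary shape-operator estimates, then a quadratic inequality in $\lambda$) is indeed the paper's strategy, and your final quadratic $4\lambda^2-4(s_rs_{n-r}+k)\lambda+k^2\ge 0$ is exactly the paper's inequality $(\lambda-k/2)^2\ge s_rs_{n-r}\lambda$. But there is a genuine gap at the crucial step: you extend the wrong object. You solve $\overline\Delta\Omega=0$, $i^*\Omega=\omega$, $i^*\overline\delta\Omega=0$ for the $r$-eigenform $\omega$ itself. One can check that such an $\Omega$ is co-closed, but nothing in your argument makes it closed, and then the left-hand side of Reilly's formula is $\int_N\left(|\overline d\Omega|^2-|\overline\nabla\Omega|^2\right)$, which is \emph{not} $\le 0$: the pointwise inequality $|\overline d\Omega|^2\le|\overline\nabla\Omega|^2$ is false in general (for a $1$-form whose covariant derivative is antisymmetric at a point, $|\overline d\Omega|^2=2|\overline\nabla\Omega|^2$ there), so no ``refined Kato/trace inequality'' of the kind you invoke can exist. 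The paper avoids this by writing $\omega=d\phi$ with $\phi$ a co-exact $(r-1)$-eigenform, extending $\phi$ (via Duff--Spencer) to an $(r-1)$-form $\overline\phi$ with $\overline\delta\,\overline d\,\overline\phi=0$ and $i^*\overline\phi=\phi$, and applying Reilly to $\overline\omega=\overline d\,\overline\phi$, which is exact (hence closed) and co-closed by construction, so the left side is exactly $-\int_N|\overline\nabla\overline\omega|^2\le 0$.

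Second, even granting closedness, your listed estimates cannot produce the $k$-terms in your claimed quadratic. In the paper, $k$ enters only because the interior term $k\int_N|\overline\omega|^2$ is converted by Stokes (using the primitive $\overline\phi$ and $\overline\delta\,\overline d\,\overline\phi=0$) into the boundary integral $k\int_\Sigma\langle\phi,\iota_\nu\overline\omega\rangle$, where it merges with the Reilly cross term $-2\lambda\int_\Sigma\langle\phi,\iota_\nu\overline\omega\rangle$ into $-(2\lambda-k)\int_\Sigma\langle\phi,\iota_\nu\overline\omega\rangle$; the same identity gives $\int_\Sigma\langle\phi,\iota_\nu\overline\omega\rangle=\int_N|\overline\omega|^2>0$, which is what selects the larger root of the quadratic. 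Your $\Omega$ has no interior primitive --- a closed, co-closed form whose boundary restriction is exact need not be exact in $N$ (think of $N=(S^1\times S^2)\setminus B^3$, where $H^1(N)\ne 0$ but every closed $1$-form on $\partial N=S^2$ is exact) --- so this conversion is unavailable, and dropping the nonnegative interior term only recovers the $k=0$ bound $\lambda\ge s_rs_{n-r}$. Relatedly, your claim that ``the normal data is likewise proportional to $\lambda\int_\Sigma|\omega|^2$'' is not right: $\int_\Sigma|\iota_\nu\overline\omega|^2$ is a free, unknown quantity that must be eliminated by completing the square against $\phi$; that elimination is precisely where the product $s_rs_{n-r}$ is created. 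Finally, the rigidity sketch is off: constant $r$- and $(n-r)$-curvatures do not make $\Sigma$ umbilic, and the paper identifies $N$ with a ball not from umbilicity but from the computation $\mathrm{Area}(\Sigma)/\mathrm{Vol}(N)=s_r+s_{n-r}\le\frac{n}{n-1}s_{n-1}$ together with Ros's theorem; moreover the Ricci hypothesis is removable for $r=1,n-1$ because equality forces $k=0$ and $W^1=\overline{Rc}$, so the Ricci condition is already contained in the hypotheses --- not because the problem ``reduces to the function case.''
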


The curvatures $W^r$ and $s_r$ will be explained in Section \ref{sec: eigenv}. When $r=1$, $s_1$ is the minimum eigenvalue of the second fundamental form of $\Sigma$ and $s_{n-1}$ is the minimum of its mean curvature, $W^1$ is just the Ricci curvature and $\lambda_1=\lambda_{1,0}''$ is the first nonzero eigenvalue of the Laplacian on functions on $\Sigma$.

 We will also give a sharp lower bound of $\lambda_{1,r}'$ in terms of the first nonzero Steklov eigenvalues for differential forms, as well as some lower and upper bounds for the Steklov eigenvalues in terms of $\lambda_{1,r}'$ (Theorem \ref{thm: all}). The Steklov eigenvalue is the eigenvalue of an elliptic nonnegative self-adjoint pseudo-differential operator of order one, which will be explained in Section \ref{sec: eigenv}. Recently, there  are a number of authors  studying the Steklov eigenvalues problems (e.g. \cite{fan2013steklov}, \cite{fraser2011first}, \cite{fraser2012minimal}, \cite{fraser2012sharp}). It is also interesting to see that when $n=2$, an extension of the result of Hang-Wang \cite{hang2007rigidity} gives an improvement of Choi-Wang's result \cite{choi1983first}, which is a special case of Theorem \ref{thm: lambda form}. Indeed, we can prove that $\lambda_1(\Sigma)\ge k$ and the estimate is sharp (Theorem \ref{thm: n=2}). It may have some independent interest.

 There are a number of applications of our results. For example:
 \begin{theorem}(Corollary \ref{cor: s3})
If $\Sigma$ is a closed surface of genus $g$ in $\mathbb S^3$ with second fundamental form bounded from below by $s_1$ and mean curvature bounded from below by $s_{n-1}$, then
$$ (2+s_{n-1}s_1+\sqrt{(s_{n-1}s_1)^2+4s_{n-1}s_1})\mathrm{Area}(\Sigma)< 16\pi (g+1).$$
 \end{theorem}

This paper is organized as follows.
 In Section \ref{sec: eigenv}, we prove the various
estimates for the Hodge Laplacian eigenvalues and also Steklov eigenvalues for differential forms on a manifold with boundary. In Section \ref{sec: app}, we give some applications of the main results and take a closer look when $N$ is $2$-dimensional.

{\sc Acknowledgments}:
We would like to thank Prof. Luen-Fai Tam and Gilbert Weinstein for useful comments. We would also like to thank
the anonymous referee for the useful comments and suggestions.

\section{ Eigenvalue estimates}\label{sec: eigenv}

In this section, we will prove several lower bounds of the first nonzero eigenvalue of the Hodge Laplacian on differential forms on the boundary $\Sigma$ of a Riemannian manifold $(N,g)$. These results are the natural generalizations of some results in \cite{escobar1999isoperimetric}, \cite{hang2007rigidity}, \cite{ilias2011reilly}, \cite{raulot2011reilly}, \cite{wang2009sharp} and \cite{xia1997rigidity}.

Let us first set up the notations.
Throughout this paper, $(N,g)$ denotes a compact $n$-dimensional connected oriented Riemannian manifold ($n\geq 2$)
smooth boundary $\partial N=\Sigma$. We denote the Levi-Civita connection on $N$ and $\Sigma$ by $\overline \nabla$ and $\nabla $ respectively.

Fix $x\in \Sigma$ and let $k_1(x), \cdots, k_{n-1}(x)$ be the principal curvatures of $\Sigma $ at $x$ w.r.t. the outward unit normal $\nu$. We define the $r$-curvatures (not to be confused with the $r$-th mean curvature) to be all the possible sums
$ k_{i_1}(x)+\cdots + k_{i_r}(x)$ where $i_1<\cdots < i_r$. We can assume $k_1(x)\le \cdots \le k_{n-1}(x)$, then we define the lowest $r$-curvature to be $$s_r(x)= k_1(x)+\cdots + k_r(x).$$
We also define $$ s_r(\Sigma)= \min_{x\in\Sigma }s_r(x).$$
Note that the second fundamental form is bounded from below by $s_1$ and $s_{n-1}(x)=H$ is the mean curvature. It is easy to see that if $l\le m$, then $\frac{s_l}l\le \frac{s_m}m$
 and that $s_l\ge 0$ implies $s_m\ge 0$.

We denote by $\overline d$ and $\overline \delta$ the exterior derivative and its (formal) adjoint w.r.t. the $L^2$ inner product on $(N,g)$ respectively. The Hodge Laplacian $\overline\Delta$ of a $p$-form on $(N,g)$ is defined by
$$\overline\Delta \alpha = -(\overline d\, \overline \delta + \overline \delta \,\overline d)\alpha$$
for $\alpha \in \Omega ^r (N)$. Our sign is chosen such that $\overline \Delta$ is the second derivative for functions on $N=\mathbb R$.
Recall the Bochner formula (see e.g. \cite{petersen1998riemannian} p.218 Theorem 50):
$$ -\overline \Delta \alpha= \overline \nabla ^* \overline \nabla \alpha + W^r(\alpha)$$
where $W^r$ is a self-adjoint endomorphism on $\Omega^r(N)$, which is determined by the Riemann curvature tensor on $(N,g)$. This term is called the Bochner curvature. When $r=1$, $W^1$ is just the Ricci curvature $\overline{\mathrm{Ric}}$ of $N$ and by \cite{gallot1975operateur}, $W^r\ge r(n-r)\gamma$ where $\gamma$ is the lowest eigenvalue of the curvature operator on $(N,g)$. However, $W^r \ge 0$ is usually much weaker than the curvature operator being nonnegative.

We define the shape operator $S= \overline \nabla \nu$ on $T\Sigma$ and define $S^r: \Omega ^r(\Sigma)\to \Omega ^r(\Sigma)$ by
$$ S^r\alpha(X_1,\cdots , X_r)= \sum_{j=1}^r \alpha (X_1, \cdots, S(X_j), \cdots, X_r).$$
We also define $S^0$ to be zero. For example, if $\alpha$ is a 1-form, then $S^1\alpha (X)= \alpha(S(X))$. Observe that $S^{n-1}\alpha = H \alpha$ and that the eigenvalues of $S^r$ are exactly the $r$-curvatures of $\Sigma$, therefore
$$ \langle S^r \alpha, \alpha\rangle\ge s_r(\Sigma) |\alpha|^2.$$

We define $\lambda_{k,r}'$ (respectively $\lambda_{k,r}''$) to be the $k-th$ nonzero eigenvalue for the exact (respectively co-exact) $r$-forms on $\Sigma$.
By Hodge decomposition theorem and Hodge duality (e.g. \cite{warner1971foundations}), we have
$$\begin{cases}
 \lambda_{1,r}(\Sigma)=\min \{ \lambda'_{1,r}(\Sigma), \lambda_{1,r}''(\Sigma)\}, \\
 \lambda_{1,r}''(\Sigma)= \lambda_{1,r+1}'(\Sigma),\\
 \lambda_{1,r}''(\Sigma) = \lambda'_{1,n-1-r}(\Sigma).
\end{cases}$$

From this we see that to determine $\lambda_{1,r}$, it suffices to determine $\lambda_{1,r}'$ for $1\le r\le\lfloor\frac n2\rfloor$.

The following formula is the generalization of Reilly's formula to differential forms.
\begin{theorem}(\cite{raulot2011reilly} Theorem 3)\label{thm: reilly forms}
 Let $\alpha\in \Omega^r(N)$, $r\ge 1$, then
 $$\int_N \left(|\overline d\alpha|^2 +|\overline \delta \alpha|^2 - |\overline \nabla \alpha|^2\right) =\int_N W^r(\alpha, \alpha) - 2\int_\Sigma \langle \iota_\nu\alpha, \delta i^*\alpha\rangle+\int_\Sigma B(\alpha, \alpha)$$
 where the boundary term is given by $$ B(\alpha, \alpha)= \langle S^r (i^*\alpha), i^*\alpha\rangle+ \langle S^{n-r}(i^*\overline *\alpha), i^* \overline*\alpha\rangle.$$
 Here $i: \Sigma \to N$ is the inclusion and $\overline *: \Omega ^r(N)\to \Omega ^{n-r}(N)$ is the Hodge star operator on $N$. We will also denote by $d$ and $\delta$ the exterior derivative and its adjoint on $\Sigma$ respectively.
\end{theorem}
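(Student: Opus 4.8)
The plan is to derive the formula from the pointwise Bochner--Weitzenb\"ock identity by integrating over $N$ and carefully collecting the boundary contributions produced by integration by parts. Recall that the Bochner curvature term is defined precisely so that
\[
(\overline d\,\overline\delta+\overline\delta\,\overline d)\alpha=\overline\nabla^*\overline\nabla\alpha+W^r(\alpha),
\]
where $\overline\nabla^*\overline\nabla$ is the rough (connection) Laplacian. Pairing this with $\alpha$ and integrating over $N$ reduces the whole identity to an accounting of boundary terms, so the real content lies entirely on $\Sigma$.

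First I would record three Green-type identities on the compact manifold $N$ with boundary $\Sigma$ and outward unit normal $\nu$. Applying the integration-by-parts formula $\int_N\langle \overline d\omega,\eta\rangle=\int_N\langle\omega,\overline\delta\eta\rangle+\int_\Sigma\langle i^*\omega,\iota_\nu\eta\rangle$ twice, once with $(\omega,\eta)=(\alpha,\overline d\alpha)$ and once with $(\omega,\eta)=(\overline\delta\alpha,\alpha)$, gives
\[
\int_N|\overline d\alpha|^2=\int_N\langle\overline\delta\,\overline d\alpha,\alpha\rangle+\int_\Sigma\langle i^*\alpha,\iota_\nu\overline d\alpha\rangle,
\]
\[
\int_N|\overline\delta\alpha|^2=\int_N\langle\overline d\,\overline\delta\alpha,\alpha\rangle-\int_\Sigma\langle i^*\overline\delta\alpha,\iota_\nu\alpha\rangle,
\]
while the divergence theorem applied to the vector field dual to $X\mapsto\langle\overline\nabla_X\alpha,\alpha\rangle$ yields $\int_N\langle\overline\nabla^*\overline\nabla\alpha,\alpha\rangle=\int_N|\overline\nabla\alpha|^2-\int_\Sigma\langle\overline\nabla_\nu\alpha,\alpha\rangle$. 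Summing the first two, substituting the Bochner identity for $\int_N\langle(\overline d\,\overline\delta+\overline\delta\,\overline d)\alpha,\alpha\rangle$ and using the third, every bulk term except $\int_N W^r(\alpha,\alpha)$ cancels, leaving
\[
\int_N|\overline d\alpha|^2+|\overline\delta\alpha|^2-|\overline\nabla\alpha|^2=\int_N W^r(\alpha,\alpha)+\int_\Sigma\mathcal B,
\]
\[
\mathcal B:=\langle i^*\alpha,\iota_\nu\overline d\alpha\rangle-\langle i^*\overline\delta\alpha,\iota_\nu\alpha\rangle-\langle\overline\nabla_\nu\alpha,\alpha\rangle .
\]

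The main work, and the main obstacle, is then to show pointwise on $\Sigma$ that $\mathcal B=-2\langle\iota_\nu\alpha,\delta i^*\alpha\rangle+B(\alpha,\alpha)$. For this I would split $\alpha$ along $\Sigma$ into its tangential part $i^*\alpha$ and its normal part, encoded by $\iota_\nu\alpha$, and establish the boundary structure formulas relating the ambient operators to the intrinsic ones. The commutation $i^*\overline d\alpha=d\,i^*\alpha$ is immediate since pullback commutes with the exterior derivative; the genuinely delicate formulas are those for $\iota_\nu\overline d\alpha$, for $i^*\overline\delta\alpha$, and for the normal covariant derivative $\overline\nabla_\nu\alpha$, each of which produces a term involving the shape operator $S=\overline\nabla\nu$ together with an intrinsic $d$ or $\delta$ and a purely normal derivative. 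Feeding these into $\mathcal B$, the intrinsic-derivative and normal-derivative pieces should combine into the single cross term $-2\langle\iota_\nu\alpha,\delta i^*\alpha\rangle$, while all the shape-operator pieces assemble into the two quadratic forms $\langle S^r(i^*\alpha),i^*\alpha\rangle$ and $\langle S^{n-r}(i^*\overline *\alpha),i^*\overline *\alpha\rangle$.

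The cleanest way to control these shape-operator terms, and to organise the otherwise heavy bookkeeping, is to exploit Hodge duality. The quantities $|\overline d\alpha|^2+|\overline\delta\alpha|^2$, $|\overline\nabla\alpha|^2$ and $W^r(\alpha,\alpha)$ are all invariant under $\alpha\mapsto\overline *\alpha$ (with $r\mapsto n-r$), so the boundary form $\mathcal B$ must be symmetric under this replacement; moreover $i^*\overline *\alpha$ is, up to sign, the Hodge dual on $\Sigma$ of the normal part $\iota_\nu\alpha$. Thus it suffices to derive carefully the tangential contribution $\langle S^r(i^*\alpha),i^*\alpha\rangle$ together with the cross term $-2\langle\iota_\nu\alpha,\delta i^*\alpha\rangle$, and then obtain the remaining normal contribution $\langle S^{n-r}(i^*\overline *\alpha),i^*\overline *\alpha\rangle$ by applying the same computation to $\overline *\alpha$. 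I expect the sign conventions in the boundary structure formulas---especially keeping the orientation-dependent factors in the Hodge star and in $\iota_\nu$ consistent---to be the most error-prone step, and would fix them by testing the identity in the model case $r=1$, where it must reduce to the scalar Reilly formula of Theorem \ref{thm: reilly}, and on forms that are purely tangential or purely normal along $\Sigma$.
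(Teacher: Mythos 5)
Note first that the paper contains no proof of this statement to compare against: it is quoted directly from Raulot--Savo (\cite{raulot2011reilly}, Theorem 3), and the paper's only comment is that the scalar Reilly formula is recovered by taking $\alpha=\overline d f$. So your attempt must stand on its own, and it does not yet: your opening reduction is correct and standard --- the two Green identities for $(\overline d,\overline\delta)$, the divergence identity $\int_N\langle\overline\nabla^*\overline\nabla\alpha,\alpha\rangle=\int_N|\overline\nabla\alpha|^2-\int_\Sigma\langle\overline\nabla_\nu\alpha,\alpha\rangle$, and the Bochner formula (with the paper's sign conventions) do combine to give
\[
\int_N |\overline d\alpha|^2+|\overline\delta\alpha|^2-|\overline\nabla\alpha|^2=\int_N W^r(\alpha,\alpha)+\int_\Sigma\Bigl(\langle i^*\alpha,\iota_\nu\overline d\alpha\rangle-\langle i^*\overline\delta\alpha,\iota_\nu\alpha\rangle-\langle\overline\nabla_\nu\alpha,\alpha\rangle\Bigr),
\]
but everything after this display is announced rather than proved, and that is precisely where the content of the theorem lies. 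You never derive the three structure formulas you yourself flag as ``genuinely delicate'' --- the expressions for $\iota_\nu\overline d\alpha$, $i^*\overline\delta\alpha$ and $\overline\nabla_\nu\alpha$ along $\Sigma$ in terms of intrinsic operators, normal derivatives and the shape operator (the first of these is exactly the identity $\delta i^*\overline\alpha=i^*\overline\delta\,\overline\alpha+\iota_\nu\overline\nabla_\nu\overline\alpha-S^{r-1}(\iota_\nu\overline\alpha)+H\iota_\nu\overline\alpha$ that the paper later records in \eqref{eq: formulas}) --- and the assertion that the pieces ``should combine'' into $-2\langle\iota_\nu\alpha,\delta i^*\alpha\rangle+B(\alpha,\alpha)$ is exactly the statement to be proved, not a step toward it.

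The Hodge-duality shortcut you propose to halve the work also has a genuine flaw. Invariance of the bulk terms under $\alpha\mapsto\overline*\alpha$ yields only the integral identity $\int_\Sigma\mathcal B(\overline*\alpha)=\int_\Sigma\mathcal B(\alpha)$, not the pointwise symmetry of $\mathcal B$ that you assert: the integrands involve first derivatives of $\alpha$ along $\Sigma$ and may differ by an intrinsic divergence (compare $\int_\Sigma\langle du,v\rangle=\int_\Sigma\langle u,\delta v\rangle$, whose integrands are certainly not equal pointwise). More importantly, even granted the duality at the integral level, symmetry alone does not determine how $\int_\Sigma\mathcal B$ splits into a tangential quadratic form, a normal quadratic form, and a cross term with coefficient $-2$; to get that you must carry out the full computation for one half, including all cross terms, and verify --- with the orientation-dependent signs --- that $i^*\overline*\alpha$ is (up to sign) the intrinsic Hodge dual of $\iota_\nu\alpha$ and that $\delta$ on $\Sigma$ intertwines with $d$ under $*$. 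Those sign-sensitive verifications constitute the proof; as written, your text is a plausible plan for one, essentially the plan executed in Section 2 of \cite{raulot2011reilly}, but not a proof.
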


The classical Reilly formula
can be recovered by setting $\alpha=\overline df\in \Omega ^1(N)$:
\begin{theorem} \cite{reilly1977applications}\label{thm: reilly}
 Let $f$ be a smooth function on $N $ and $z=f|_{\Sigma }$.
Then
 \begin{equation}\label{eq: Reilly}
\int_N \left((\overline \Delta f)^2-|\overline \nabla ^2f|^2\right)
=
\int_N \overline{\mathrm{Ric}}(\overline \nabla f, \overline \nabla f)+\int_{\Sigma} \left(2\frac{\partial f}{\partial \nu}\Delta z+H \left(\frac{\partial f}{\partial \nu}\right)^2 +A(\nabla z,\nabla z)\right).
\end{equation}
Here $A$ is the second fundamental form and $H=\mathrm{tr}_\Sigma(A)$ is the mean curvature of $\Sigma$ in $N$.
\end{theorem}

We now state our first main result.

\begin{theorem}\label{thm: lambda form}
Let $(N^n,g)$ be a compact orientable Riemannian manifold with boundary $\Sigma$. Suppose $W^r$ or $W^{n-r}$ on $N$ is bounded from below by $k\ge0$. Assume that $s_q\ge 0$ where $q=\min\{r,n-r\}$.
Then for
 $1\le r\le n-1$, we have
\begin{equation}\label{ineq: boch}
2\lambda_{1,r}'=2 \lambda_{1,r-1}''\ge k+ s_r s_{n-r}+\sqrt{ (s_r s_{n-r})^2 + 2 s_r s_{n-r}k}.
\end{equation}
 If the equality holds, then $k=0$, the $r$-curvatures constantly equal $s_r>0$ and the $(n-r)$-curvatures constantly equal $s_{n-r}>0$.
 If, furthermore, $(N,g)$ has non-negative Ricci curvature, then the equality holds if and only if $(N,g)$ is isometric to a Euclidean ball. The condition on Ricci curvature can be removed if $r=1$ or $n-1$.
\end{theorem}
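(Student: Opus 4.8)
The plan is to produce a lower bound for $\lambda:=\lambda_{1,r}'$ by applying the generalized Reilly formula (Theorem \ref{thm: reilly forms}) to a carefully chosen exact harmonic field on $N$ whose tangential part on $\Sigma$ is the eigenform. First I would reduce to the case $W^r\ge k$: by the Hodge/duality relations recorded above one has $\lambda_{1,r}'(\Sigma)=\lambda_{1,n-r}'(\Sigma)$, and the right-hand side of \eqref{ineq: boch} is symmetric under $r\leftrightarrow n-r$, so if instead $W^{n-r}\ge k$ one simply runs the argument in degree $n-r$; the identity $\lambda_{1,r}'=\lambda_{1,r-1}''$ is likewise immediate from the stated relations. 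Next, let $\omega\in\Omega^r(\Sigma)$ be an exact eigenform, so $d\delta\omega=\lambda\omega$ and $d\omega=0$, and set $\xi:=\delta\omega$, a co-exact $(r-1)$-form with $d\xi=\lambda\omega$ and, integrating by parts on the closed $\Sigma$, $\int_\Sigma|\xi|^2=\lambda\int_\Sigma|\omega|^2$.

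The key construction is to solve the Dirichlet-type problem $\overline\delta\,\overline d\beta=0$ on $N$ with $i^*\beta=\tfrac1\lambda\xi$ on $\Sigma$, for $\beta\in\Omega^{r-1}(N)$ (existence by minimizing $\int_N|\overline d\beta|^2$ over forms with the prescribed tangential datum), and to put $\alpha:=\overline d\beta$. Then $\alpha$ is an exact harmonic field: $\overline d\alpha=0$ automatically and $\overline\delta\alpha=\overline\delta\,\overline d\beta=0$, while $i^*\alpha=d(i^*\beta)=\tfrac1\lambda d\xi=\omega$, so $\delta i^*\alpha=\xi$. Two integrations by parts handle the bookkeeping: since $\alpha$ is exact and co-closed, $\int_N|\alpha|^2=\int_\Sigma\langle i^*\beta,\iota_\nu\alpha\rangle=\tfrac1\lambda\int_\Sigma\langle\xi,\iota_\nu\alpha\rangle$, whence the cross term in Theorem \ref{thm: reilly forms} equals $2\int_\Sigma\langle\iota_\nu\alpha,\delta i^*\alpha\rangle=2\int_\Sigma\langle\iota_\nu\alpha,\xi\rangle=2\lambda\int_N|\alpha|^2$. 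Because $\alpha$ is a harmonic field the left-hand side of the Reilly formula is $-\int_N|\overline\nabla\alpha|^2$, and using $W^r\ge k$ together with the pointwise bound $B(\alpha,\alpha)\ge s_r|i^*\alpha|^2+s_{n-r}|\iota_\nu\alpha|^2$ (the second term via the isometry $i^*\overline*\alpha=\pm\,\ast_\Sigma\iota_\nu\alpha$ and $\langle S^{n-r}\cdot,\cdot\rangle\ge s_{n-r}|\cdot|^2$), I obtain, writing $P=\int_\Sigma|\omega|^2$, $Q=\int_\Sigma|\iota_\nu\alpha|^2$ and $R=\int_N|\alpha|^2$,
\begin{equation*}
\int_N|\overline\nabla\alpha|^2+s_rP+s_{n-r}Q\le(2\lambda-k)R.
\end{equation*}

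Since $R>0$ (as $\omega\neq0$) this already gives $2\lambda\ge k$, which settles the degenerate case $s_rs_{n-r}=0$. When $s_rs_{n-r}>0$ I would finish with two elementary estimates: $R=\tfrac1\lambda\int_\Sigma\langle\xi,\iota_\nu\alpha\rangle\le\tfrac1\lambda\sqrt{\lambda P}\,\sqrt{Q}=\sqrt{PQ/\lambda}$ (Cauchy–Schwarz and $\int_\Sigma|\xi|^2=\lambda P$) and $s_rP+s_{n-r}Q\ge 2\sqrt{s_rs_{n-r}PQ}$ (AM–GM). Combining these with the displayed inequality and cancelling $\sqrt{PQ}$ yields $2\sqrt{s_rs_{n-r}}\le(2\lambda-k)/\sqrt\lambda$, i.e. $(2\lambda-k)^2\ge 4\lambda\,s_rs_{n-r}$, which together with $2\lambda\ge k$ is algebraically equivalent to \eqref{ineq: boch}.

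For the rigidity, equality forces $\overline\nabla\alpha\equiv0$; a nonzero parallel form is a harmonic field with $\overline\nabla^*\overline\nabla\alpha=0$, so the Bochner formula gives $W^r(\alpha,\alpha)=0=k|\alpha|^2$ and hence $k=0$, while equality in AM–GM and in the curvature bounds pins the $r$- and $(n-r)$-curvatures to the positive constants $s_r,s_{n-r}$. I expect the main obstacle to be the last statement: upgrading ``$N$ carries a nontrivial parallel $r$-form and $\Sigma$ is totally umbilic with constant curvatures'' to ``$N$ is a Euclidean ball'' under $\overline{Rc}\ge0$. For $r=1$ this is classical, since the parallel $1$-form is $\overline\nabla f$ with $\overline\nabla^2 f=0$, forcing flatness and a round constant-mean-curvature boundary, which is precisely why the Ricci hypothesis can be dropped there; for general $r$ I would either reduce to the degree-one situation or invoke the corresponding rigidity of Raulot–Savo \cite{raulot2011reilly}.
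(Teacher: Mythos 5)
Your derivation of the inequality \eqref{ineq: boch} is correct and is, in substance, the paper's own argument: you construct the same extension (a form $\beta$ with $\overline\delta\,\overline d\beta=0$ and prescribed tangential trace --- the paper quotes Duff--Spencer \cite{duff1952harmonic} for existence, your Dirichlet-energy minimization is the standard proof of that fact), apply the same Reilly formula to the exact harmonic field $\alpha=\overline d\beta$, and use the same curvature and shape-operator bounds. The only cosmetic difference is that you run Cauchy--Schwarz and AM--GM on the integrated quantities $P,Q,R$, whereas the paper completes a square pointwise in the integrand; your algebra is sound, since $(2\lambda-k)^2\ge 4 s_r s_{n-r}\lambda$ together with $2\lambda\ge k$ (and $\lambda>0$) excludes the lower root of the quadratic, exactly as in the paper. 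Likewise your identification of $k=0$ in the equality case (Bochner formula applied to the parallel field) is equivalent to the paper's computation with the explicit expression for $W^r$.

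The genuine gap is the final rigidity claim, which you explicitly leave open: upgrading ``$N$ carries a nonzero parallel $r$-form and the $r$- and $(n-r)$-curvatures are positive constants'' to ``$N$ is a Euclidean ball'' under $\overline{Rc}\ge 0$. This is not a citation-level detail; it is the substantive last third of the paper's proof, and it does not reduce to degree one. The paper's argument: normalize $|\overline\omega|\equiv 1$, use $\iota_\nu\overline\omega=s_r\phi$ and $\lambda=s_rs_{n-r}$ to compute $\mathrm{Area}(\Sigma)=\frac{s_r+s_{n-r}}{s_r}\int_\Sigma|\iota_\nu\overline\omega|^2$ and $\mathrm{Vol}(N)=\frac{1}{s_r}\int_\Sigma|\iota_\nu\overline\omega|^2$, hence $\frac{\mathrm{Area}(\Sigma)}{\mathrm{Vol}(N)}=s_r+s_{n-r}\le\frac{n}{n-1}s_{n-1}$ (via $\frac{s_l}{l}\le\frac{s_m}{m}$), and then invoke Ros's theorem \cite{ros1987compact} to conclude $N$ is a ball. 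Moreover, your fallback for $r=1$ is incorrect as stated: a nonzero parallel $1$-form does \emph{not} force flatness --- it only gives a local isometric splitting $I\times M$ --- so ``classical'' does not close the argument; and the actual reason the Ricci hypothesis can be removed when $r=1$ or $n-1$ is not a separate rigidity mechanism but the observation that $W^1=\overline{Rc}$, so by \eqref{eq: W=W} the hypothesis already yields $\overline{Rc}\ge k\ge 0$ and Ros's theorem applies. Finally, the ``if and only if'' also requires the converse verification that equality does hold on a Euclidean ball, using $\lambda_{1,r}'(\mathbb S^{n-1})=r(n-r)$ and $s_r=r$, $s_{n-r}=n-r$ for the unit sphere; your proposal omits this direction entirely.
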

\begin{proof}
Note that by Hodge decomposition theorem and Hodge duality, $\lambda_{1,r}'= \lambda_{1,r-1}''= \lambda_{1,n-r}'$ and by \eqref{eq: W=W} below, both $W^r$ and $W^{n-r}$ are bounded from below by $k$.

 Let $\phi$ be a co-exact $(r-1)$ eigenform on $\Sigma$ with eigenvalue $\lambda= \lambda''_{1,r-1}=\lambda_{1,r}'$, i.e. $\Delta \phi=-\delta d \phi=-\lambda \phi$. Then $\omega=d\phi$ is an exact $r$-eigenform with eigenvalue $\lambda$. By Theorem 2 of \cite{duff1952harmonic} (p.148), there exists an $(r-1)$-form $\overline \phi$ on $N$ such that $\overline \delta\,\overline d \,\overline \phi=0$ and $i^* \overline \phi=\phi$ on $\Sigma$. Let $\overline \omega=\overline d \,\overline \phi$. Then
 $$\begin{cases}
 \overline d\overline \omega= \overline \delta \overline \omega =0 \quad \text{on }N\\
 i^* \overline \omega = \omega \quad \text{on }\Sigma.
 \end{cases}
 $$
 Using Reilly's formula on $\overline \omega=\overline d \,\overline \phi$,
\begin{equation}\label{ineq: reilly}
\begin{split}
0&\geq \int_{N}-|\overline \nabla \overline \omega|^2\\
&=\int_{N} W^r(\overline d\,\overline \phi, \overline d\,\overline \phi)+ \int_\Sigma -2\langle \iota_\nu \overline \omega, \delta \omega\rangle+\langle S^{r} (i^*\overline\omega), i^*\overline\omega\rangle+ \langle S^{n-r}(i^*\overline *\,\overline\omega), i^*\overline *\,\overline\omega\rangle\\
&\geq k\int_{N} |\overline d\,\overline \phi|^2+\int_\Sigma - 2 \lambda\langle \iota_\nu \overline \omega, \phi\rangle +s_{r} |i^*\overline\omega|^2+ s_{n-r}|i^*\overline *\,\overline\omega|^2\\
&= k\int_{N} \langle \overline \phi, \overline \delta \,\overline d \,\overline\phi\rangle+ k \int_\Sigma \langle i^*\overline \phi,\iota_\nu \overline d\,\overline \phi\rangle
+\int_\Sigma - 2 \lambda\langle \iota_\nu \overline \omega, \phi\rangle +s_{r} |\omega|^2+ s_{n-r}|i^*\overline *\,\overline\omega|^2\\
&=- (2 \lambda-k)\int_\Sigma \langle \phi , \iota_\nu \overline \omega\rangle +s_{r} \int_\Sigma |d\phi|^2+ s_{n-r}\int_\Sigma|i^*\overline *\,\overline\omega|^2.
\end{split}
\end{equation}
The condition $s_q\ge 0$ implies $s_{n-r}\ge 0$. From the above, as $\int_\Sigma \langle \phi,\iota _\nu \overline \omega\rangle= \int_N |\overline \omega|^2>0$, this shows that
$ 2\lambda\ge k$, which proves \eqref{ineq: boch} in the case where $s_{n-r}=0$. So in the following we can assume $s_{n-r}>0$.

 As $|i^* \overline * \,\overline \omega|^2= |\iota_\nu \overline \omega|^2$ and $\int_\Sigma |d\phi|^2 = \int_\Sigma \langle \phi , \delta d \phi\rangle= \lambda\int_\Sigma |\phi |^2$, the inequality \eqref{ineq: reilly} becomes
 \begin{equation}\label{ineq: reilly2}
 \begin{split}
     0&\ge \int_\Sigma - (2 \lambda-k) \langle \phi, \iota_\nu \overline \omega\rangle +s_{r}\lambda |\phi|^2+ s_{n-r}|\iota_\nu\overline\omega|^2\\
     &= \int_\Sigma s_{n-r}\left |\iota_\nu \overline \omega - \frac {\lambda -k/2} {s_{n-r}}\phi\right|^2+ \left (s_{r} \lambda -\frac {(\lambda- k/2)^2} {s_{n-r}}\right) |\phi|^2\\
  &\ge \int_\Sigma \left(s_{r} \lambda -\frac {(\lambda- k/2)^2} {s_{n-r}}\right) |\phi|^2.
 \end{split}
 \end{equation}
 As $\phi$ is not identically zero, we conclude that
 \begin{equation*}\label{ineq: s}
 \left(\lambda - \frac{k}{2}\right)^2 \ge s_{r} s_{n-r}\lambda=2c\lambda
 \end{equation*}
 where $2c= s_{r}s_{n-r}$.
 This implies either
 $$ \lambda-\frac k2 \le c- \sqrt {c^2 +ck} \textrm{ or }\lambda-\frac k2 \ge c+ \sqrt{c^2 +ck}.$$
In view of \eqref{ineq: reilly}, we conclude that the second case holds. i.e. $$2\lambda\ge k+ s_r s_{n-r}+\sqrt{ (s_r s_{n-r})^2 + 2 s_r s_{n-r}k}.$$

 Suppose the equality holds, then from \eqref{ineq: reilly}, $\overline \nabla \overline\omega=0$. As $\overline \omega$ is parallel, $|\overline \omega|^2$ is constant, and as $i^* \overline \omega = \omega$, this constant is nonzero, which we can assume to be $1$. The curvature term $W^r$ is given by (see e.g. \cite{petersen1998riemannian} p.218 Theorem 50):
 \begin{equation}\label{eq: W}
 W^r(\overline \omega)=\frac 1 2 \sum_{i,j=1}^n \theta ^i \cdot \theta ^j \cdot \overline R(e_i,e_j)\overline \omega
 \end{equation}
 where $\{e_j\}_{j=1}^n$ is a local orthonormal frame on $N$, $\{\theta ^j\}_{j=1}^n$ is its dual frame and $\overline R$ is the curvature operator on $(N,g)$. Here $\theta^i\cdot \alpha= \theta^i \wedge \alpha - \iota_ {e_i}\alpha$ is the Clifford multiplication on a differential form $\alpha$. Since $0=\overline \nabla \,\overline \omega$, we have $\overline \nabla ^2 \overline \omega=0$ and so $\overline R(e_i,e_j)\overline \omega=0$. Therefore from \eqref{ineq: reilly}
 $$0=\frac 1 2\langle\sum_{i,j=1}^n \theta ^i \cdot \theta ^j \cdot \overline R(e_i,e_j)\overline \omega, \overline \omega\rangle=\langle W^r(\overline \omega) ,\overline \omega\rangle= k|\overline \omega|^2=k.$$
 So we now have
 $\lambda= s_rs_{n-r}>0$.
 Therefore from \eqref{ineq: reilly2}, $$\iota_\nu \overline \omega = \frac\lambda {s_{n-r}}\phi=s_r\phi.$$
 From this and \eqref{ineq: reilly}, \eqref{ineq: reilly2}, we see that $S^{r}\equiv s_r$ and $S^{n-r}\equiv s_{n-r}$, i.e. the $r$-curvatures and the $(n-r)$-curvatures are constants.

 Now we suppose, furthermore, that $\overline{\mathrm{Ric}}\ge 0$. As $|\overline \omega|^2=1$,
\begin{equation*}
\begin{split}
 \mathrm{Area}(\Sigma)= \int_\Sigma |\overline \omega|^2
 = \int_\Sigma (|\omega|^2 +|\iota_\nu \overline \omega|^2 )
 &= \int_\Sigma (|d\phi|^2 +|\iota_\nu \overline \omega|^2 )\\
 &= \int_\Sigma \lambda_1 |\phi|^2 + |\iota_\nu \overline \omega|^2\\
 &= \left( \frac{s_r+s_{n-r}}{s_r}\right)\int_\Sigma |\iota_\nu \overline \omega|^2.
 \end{split}
\end{equation*}
On the other hand, by Stokes theorem,
\begin{equation*}
 \begin{split}
 \mathrm{Vol}(N)= \int_N |d\overline \phi|^2
 = \int_N \langle \overline \phi, \overline\delta\, \overline d \,\overline \phi\rangle+ \int_\Sigma \langle i^* \overline \phi , \iota_\nu \overline d\,\overline \phi\rangle
 &= \int_\Sigma \langle \phi , \iota_\nu \overline \omega\rangle\\
 &= \frac 1 {s_r}\int_\Sigma |\iota_\nu \overline \omega|^2.
 \end{split}
\end{equation*}
From these we have
$$\frac{\mathrm{Area}(\Sigma)}{\mathrm{Vol}(N)}= s_r + s_{n-r}.$$
Recall that we have $\frac{s_l}l\le \frac {s_m}m$ for $l\le m$, so
$s_r+ s_{n-r}\le \frac{r}{n-1}s_{n-1}+ \frac{n-r}{n-1}s_{n-1}= \frac{n}{n-1}s_{n-1} $. Thus $$ \frac{\mathrm{Area}(\Sigma)}{\mathrm{Vol}(N)}\le\frac{n}{n-1}s_{n-1}.$$
By \cite{ros1987compact} Theorem 1, as $\overline{\mathrm{Ric}}\ge 0$, we conclude that $(N,g)$ is isometric to a Euclidean ball.

Using $\overline \nabla_X (\overline * \alpha)= \overline * (\overline \nabla _X\alpha)$ and $\theta ^j\cdot \overline * \alpha= \overline *(\theta ^j\cdot \alpha)$, we have, by \eqref{eq: W},
\begin{equation}\label{eq: W=W}
 \langle W^r(\overline \omega), \overline \omega\rangle= \langle W^{n-r}(\overline*\,\overline \omega), \overline*\,\overline \omega\rangle.
\end{equation}
As $W^1=\overline{\mathrm{Ric}}$ and $k=0$, so the condition $\overline{\mathrm{Ric}}\ge 0$ is redundant for $r=1$ or $n-1$. Finally, it is well-known that (see e.g. \cite{gallot1975operateur})
\begin{equation}\label{eq: sphere}
\lambda_{1,r}'(\mathbb S^{n-1})= r(n-r).
\end{equation}
From this it is easy to see that the equality holds on any Euclidean ball, with $k=0$.
\end{proof}
\begin{remark}
When $k=0$ and $r=1$, Theorem \ref{thm: lambda form} is  Theorem 1 in \cite{xia1997rigidity}.
\end{remark}

To state our next result, we need to define the Steklov eigenvalues as follows. Let $\alpha\in \Omega ^r(\Sigma)$, $r=0,\cdots, n-1$. Then there exists a unique $r$-form $\overline \alpha\in \Omega ^r(N)$ such that (see e.g. \cite{schwarz1995hodge} Theorem 3.4.6)
$$
\begin{cases}
 \overline \Delta \overline \alpha=0 \quad \textrm{ on }(N,g),\\
 i^*\overline \alpha= \alpha, \; \iota_\nu \overline \alpha=0 \quad \textrm{ on } \Sigma.
\end{cases}$$
We define the Steklov operator $T^r: \Omega ^r(\Sigma)\to \Omega ^r(\Sigma)$ by
$$ T^r\alpha = \iota_\nu \overline d\overline \alpha.$$
By \cite{raulot2012first} Theorem 11, $T^r$ is an elliptic nonnegative self-adjoint pseudo-differential operator of order one. Thus the eigenvalue problem
$$T^r \alpha= p\alpha$$
has a discrete spectrum
$$0\le p_{1,r}(N)\le p_{2,r}(N)\le\cdots. $$
We will write $p_{k,r}$ for $p_{k,r}(N)$. Here we use the convention in \cite{raulot2012first} that $p_{1,r}$ is the smallest nonnegative eigenvalue of $T^r$. Thus in the classical case where $r=0$, i.e. for $f\in C^\infty(\Sigma)$, $\overline f $ being the unique harmonic extension of $f$ to $N$ and $$Tf=T^0 f= \frac{\partial\overline f}{\partial \nu}, $$
the first nonnegative eigenvalue of $T$ is zero, corresponding to the constant functions on $\Sigma$. So in our convention, $p_{1,0}=0$ and $p_{2,0}$ is the smallest positive eigenvalue, usually called the first Steklov eigenvalue of $N$. We will simply denote $p_{2,0}$ by $p_2$.

We remark that the first eigenvalue of $T^r$ satisfies the min-max principle (\cite{raulot2012first} Theorem 11):
\begin{equation}\label{eq: minmax}
 p_{1,r}(N)=\inf \left\{ \frac{\int_N \left(|\overline d\,\overline \phi|^2 +|\overline \delta \,\overline \phi|^2\right)}{\int_\Sigma |\overline \phi|^2}: 0\ne \overline \phi\in \Omega ^r (N), \iota_\nu \overline \phi=0 \right\}.
\end{equation}
When $r=0$, we also have the following min-max principle for the smallest nonzero Steklov eigenvalue (see for example \cite{henrot2006extremum} p.113):
\begin{equation}\label{eq: minmax1}
 p_2(N)=p_{2,0}(N)= \inf\left\{\frac{\int_{N}|\overline \nabla \,\overline \phi|^2}{\int_\Sigma \overline \phi|_\Sigma ^2}: 0\ne \overline \phi \in C^\infty( N), \int _\Sigma \overline \phi|_\Sigma =0\right\}.
 \end{equation}

 \begin{remark}
By the Hodge-deRham theorem for manifolds with boundary (\cite{schwarz1995hodge} Theorem 2.6.1), any cohomology class of the deRham cohomology space (with real coefficients) $H^{r}_{dR}(N, \overline d)$ is uniquely represented by $\overline \phi\in \Omega ^r(N)$ such that
\begin{equation*}\label{eq: H}
\begin{cases}
\overline d\,\overline \phi = \overline \delta \,\overline\phi=0 \textrm{ on }N, \\
\iota_\nu \overline \phi =0 \textrm{ on }\Sigma.
\end{cases}
\end{equation*}
We will denote the space of all such $\overline \phi$ by $\mathcal H^r(N)$.
So from \eqref{eq: minmax}, we see that $p_{1,r}$ is positive if and only if $\mathcal H^r(N)=0$. Therefore we are interested in $p_{1,r}$ only when $\mathcal H^r(N)=0$.

By Hodge duality, the relative deRham cohomology space (cf. \cite{schwarz1995hodge} p.103) $H_{dR}^r(N, \overline \delta)$ is isomorphic to the vector space
$$ \mathcal H^r _R (N)= \{ \overline \phi\in \Omega ^r(N): \overline d\,\overline \phi=\overline \delta \,\overline \phi=0 \textrm{ on }N, i^* \overline \phi=0 \textrm{ on }\Sigma\}, $$
called the space of Dirichlet harmonic fields.
\end{remark}

\begin{theorem}\label{thm: all}
 Let $(N^n,g)$ be a compact orientable Riemannian manifold with boundary $\Sigma$. Let $r=1, \cdots,n-1$. We assume $p_{1,r-1}$ is nontrivial if $r>1$ (corresponding to $\mathcal H^{r-1}(N)=0$). Suppose $W^r(N)\ge k$, the $r$-curvatures of $\Sigma$ are bounded from below by $l$ and $s_{n-r}\ge 0$. Let $\lambda=\lambda_{1,r}'(\Sigma)=\lambda_{1,r-1}''(\Sigma)$ and let $p$ to be $p_{1,r-1}$ if $r>1$ and $p_2=p_{2,0}$ if $r=1$. Then
\begin{enumerate}
 \item\label{item: 1}
We have the following upper bound for $p$:
 \begin{equation}\label{ineq: hp}
 s_{n-r}p\le \lambda -\frac k2 +\left(\left(\lambda -\frac k2\right)^2 -s_{n-r}l\lambda \right)^{\frac 12}.
 \end{equation}

 \item\label{item: 2}
 Assume $l\le 0$, then we have the following lower bounds for $p$ and $\lambda$:
 \begin{equation}\label{ineq: hpneg}
 s_{n-r}p\ge \lambda -\frac k2 -\left(\left(\lambda -\frac k2\right)^2 -s_{n-r}l\lambda \right)^{\frac 12}.
 \end{equation}
 \begin{equation}\label{ineq: l1neg}
 \lambda\ge \frac{s_{n-r}p^2 + kp}{2p-l}.
 \end{equation}

\item\label{item: 3}
 Assume $k\ge 0$ and $l\ge 0$. We have either
 \begin{equation}\label{ineq: l1}
 \lambda\ge s_{n-r}p +\frac k2
 \end{equation}
 or
 \begin{equation}\label{ineq: l1case2}
  \lambda\ge \frac{s_{n-r}p^2 + kp}{2p-l},
 \end{equation}
 provided that it is well-defined.
 (If $\lambda \le s_{n-r}p+\frac k2$ and $s_{n-r}>0$, we will show that $2p-l>0$, see Remark \ref{rmk: escobar}. )
\item\label{item: 4}
Assume $s_r\ge 0$, and $\mathcal H^r_R(N)=0$. Then
\begin{equation}\label{ineq: rs}
 2\lambda\ge k+s_r p_{1, n-1-r}+ s_{n-r}p.
\end{equation}
If $r=1$, the condition $\mathcal H^1 _R(N)=0$ can be replaced by $s_{n-1}>0$ and $k\ge 0$.

\item\label{item: 5}
 The inequalities \eqref{ineq: hpneg} and \eqref{ineq: l1neg} are actually strict (if $l\le 0$). Any of the equality cases in \eqref{ineq: hp}, \eqref{ineq: l1case2} or \eqref{ineq: rs}
 can hold only when $k=0$, with the $r$-curvatures and $(n-r)$-curvatures both being positive constants.

 Suppose $(N,g)$ has non-negative Ricci curvature. Then the equality in \eqref{ineq: hp} or \eqref{ineq: l1case2} holds if and only if $r\ge \frac n2 +1$ or $r=1$, and $(N,g)$ is isometric to a Euclidean ball. The condition on Ricci curvature can be removed if $r=1$. The equality case in \eqref{ineq: rs} can hold if and only if $r=1$, $n\ge 4$ and $(N,g)$ is a Euclidean ball.
\end{enumerate}
\end{theorem}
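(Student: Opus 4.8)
The plan is to run the same machine as in the proof of Theorem \ref{thm: lambda form}, but now to feed the boundary integrals into the Steklov min-max principles \eqref{eq: minmax} and \eqref{eq: minmax1} instead of discarding them. Throughout I take $\phi$ to be a co-exact $(r-1)$-eigenform on $\Sigma$ realizing $\lambda=\lambda_{1,r-1}''=\lambda_{1,r}'$, so that $\omega=d\phi$ is an exact $r$-eigenform with $\delta\omega=\lambda\phi$. As in Theorem \ref{thm: lambda form} I extend $\phi$ to $\overline\phi$ on $N$ so that $\overline\omega:=\overline d\,\overline\phi$ is closed and co-closed with $i^*\overline\omega=\omega$; the first technical point is that, under the hypothesis $\mathcal H^{r-1}(N)=0$, this extension may be taken to be the harmonic, co-closed Steklov extension, i.e. to satisfy in addition $\iota_\nu\overline\phi=0$ and $\overline\delta\,\overline\phi=0$ (for $r=1$ this is automatic). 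Writing $a=\int_N|\overline\omega|^2=\int_\Sigma\langle\phi,\iota_\nu\overline\omega\rangle$, $b=\int_\Sigma|\phi|^2$ and $c=\int_\Sigma|\iota_\nu\overline\omega|^2$, the Reilly formula of Theorem \ref{thm: reilly forms} with the bounds $W^r\ge k$, $S^r\ge l$ (resp. $\ge s_r$), $S^{n-r}\ge s_{n-r}$ produces the fundamental inequality $(2\lambda-k)a\ge l\lambda b+s_{n-r}c$, exactly as in \eqref{ineq: reilly2}, while Cauchy--Schwarz gives $a^2\le bc$.

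The second step is the Steklov link. Since $\overline\phi$ is admissible in \eqref{eq: minmax} (for $r=1$ this is \eqref{eq: minmax1} for the harmonic extension, with $p=p_2$), I obtain $a\ge p\,b$, that is $t:=a/b\ge p$; combined with $a^2\le bc$ this also gives $c\ge a^2/b\ge p\,a$. Substituting $c\ge a^2/b$ into the fundamental inequality and dividing by $b$ yields the single quadratic $Q(t):=s_{n-r}t^2-(2\lambda-k)t+l\lambda\le 0$ (I assume $s_{n-r}>0$; the degenerate case is read off directly). Thus $t$ lies between the roots $t_\pm=\big((\lambda-\frac{k}{2})\pm((\lambda-\frac{k}{2})^2-s_{n-r}l\lambda)^{1/2}\big)/s_{n-r}$. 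Part \ref{item: 1} is then immediate from $p\le t\le t_+$. When $l\le 0$ the product $t_-t_+=l\lambda/s_{n-r}\le 0$ forces $t_-\le 0\le p$, which gives \eqref{ineq: hpneg} strictly and places $p$ in $[t_-,t_+]$, so $Q(p)\le 0$ rearranges to \eqref{ineq: l1neg}; this proves part \ref{item: 2} and the strictness claim in part \ref{item: 5}. For part \ref{item: 3} (here $k,l\ge 0$, so $2\lambda>k$ and $t_\pm\ge 0$) I compare $p$ with $t_-$: if $p\ge t_-$ then $Q(p)\le 0$ gives \eqref{ineq: l1case2} and one checks $2p-l>0$, while if $p<t_-$ then $s_{n-r}p<s_{n-r}t_-\le\lambda-\frac{k}{2}$ gives \eqref{ineq: l1}.

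The heart of the argument is part \ref{item: 4}, where I keep the sharper bound $S^r\ge s_r$, getting $(2\lambda-k)a\ge s_r\lambda b+s_{n-r}c$, and must control the two boundary pieces by the two Steklov eigenvalues. The normal piece is as above: $c\ge p\,a$, so $s_{n-r}c\ge s_{n-r}p\,a$. For the tangential piece I pass to the Hodge dual $\overline\zeta=\overline{*}\,\overline\omega$, a closed and co-closed $(n-r)$-form with $\int_N|\overline\zeta|^2=a$ and boundary data $\iota_\nu\overline\zeta=\pm *_\Sigma\omega$, $i^*\overline\zeta=\pm *_\Sigma\iota_\nu\overline\omega$, where $*_\Sigma$ is the Hodge star of $\Sigma$. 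Using $\mathcal H^r_R(N)=0$ I factor $\overline\zeta=\overline d\,\overline\eta$ with $\overline\eta$ an admissible (co-closed, $\iota_\nu\overline\eta=0$) $(n-1-r)$-form, so that $a=\int_N|\overline d\,\overline\eta|^2=\pm\int_\Sigma\langle i^*\overline\eta,*_\Sigma\omega\rangle$; Cauchy--Schwarz together with \eqref{eq: minmax} in degree $n-1-r$ (giving $\int_\Sigma|i^*\overline\eta|^2\le a/p_{1,n-1-r}$ and $\int_\Sigma|*_\Sigma\omega|^2=\lambda b$) yields $a\le (a/p_{1,n-1-r})^{1/2}(\lambda b)^{1/2}$, hence $\lambda b\ge p_{1,n-1-r}\,a$. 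Feeding both bounds into $(2\lambda-k)a\ge s_r\lambda b+s_{n-r}c$ and cancelling $a>0$ gives \eqref{ineq: rs}; the case $r=1$ is where $\mathcal H^1_R(N)=0$ can be traded for $s_{n-1}>0$ and $k\ge 0$.

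Finally, for the equality discussion in part \ref{item: 5} I follow the rigidity analysis of Theorem \ref{thm: lambda form}: equality in any of \eqref{ineq: hp}, \eqref{ineq: l1case2} or \eqref{ineq: rs} forces all intermediate inequalities to be equalities, hence $\overline\nabla\overline\omega=0$; the Bochner identity \eqref{eq: W} then gives $k=0$ and, via \eqref{eq: W=W}, constancy and positivity of the $r$- and $(n-r)$-curvatures. Under $\overline{Rc}\ge 0$ the isoperimetric bound of Ros forces $(N,g)$ to be a Euclidean ball, and what remains is to decide for which degrees $r$ equality is genuinely attained; this is settled by comparing the explicit spectrum \eqref{eq: sphere} of $\mathbb S^{n-1}$ with the Steklov spectrum of the ball, yielding the stated constraints ($r\ge \frac{n}{2}+1$ or $r=1$ for \eqref{ineq: hp} and \eqref{ineq: l1case2}; $r=1$, $n\ge 4$ for \eqref{ineq: rs}). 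I expect the two genuine obstacles to be precisely these last points: securing the dual factorization $\overline{*}\,\overline\omega=\overline d\,\overline\eta$ with boundary conditions compatible with \eqref{eq: minmax}, and the explicit Steklov spectral computation on the Euclidean ball needed to pin down the equality cases.
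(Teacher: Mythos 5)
Your overall architecture (Reilly's formula, the Steklov min--max, the quadratic in $t=a/b$, parallel-form rigidity) matches the paper's, but there is a genuine gap at the load-bearing step, namely the ``Steklov link'' $a\ge p\,b$, which is inequality \eqref{eq: claim} in the paper. You justify it by asserting that, when $\mathcal H^{r-1}(N)=0$, the eigenform $\phi$ admits a single extension $\overline\phi$ satisfying simultaneously $\overline\delta\,\overline d\,\overline\phi=0$ (needed so that $\overline\omega=\overline d\,\overline\phi$ is a harmonic field in Reilly's formula), $i^*\overline\phi=\phi$, $\iota_\nu\overline\phi=0$ and $\overline\delta\,\overline\phi=0$ (needed so that $\overline\phi$ is admissible in \eqref{eq: minmax} with Rayleigh quotient $a/b$). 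This boundary value problem is over-determined, and the hypothesis $\mathcal H^{r-1}(N)=0$ gives only \emph{uniqueness} of solutions, not existence. Indeed, the first and fourth conditions force $\overline\Delta\,\overline\phi=0$, so your extension would have to be the unique harmonic extension of \cite{schwarz1995hodge} (Theorem 3.4.6) with data $(i^*\overline\phi,\iota_\nu\overline\phi)=(\phi,0)$, and your claim amounts to saying that this extension of a co-exact eigenform is automatically co-closed. That is trivially true for $r=1$ (degree-zero forms), but for $r\ge 2$ the form $\overline\delta\,\overline\phi$ is merely a harmonic $(r-2)$-form with vanishing normal part, a condition satisfied by an infinite-dimensional space, so nothing forces it to vanish. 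Equivalently: by your own uniqueness argument, any solution of your BVP would coincide with the Friedrichs co-exact primitive $\widetilde\phi$ of its own differential; but the trace $\phi'=i^*\widetilde\phi$ satisfies only $d\phi'=d\phi$ and $\phi'-\phi\perp\phi$, and in general $\phi'\ne\phi$ (it need not even be co-exact on $\Sigma$, since $\delta\phi'=\iota_\nu\overline\nabla_\nu\widetilde\phi$ by \eqref{eq: formulas}). This is exactly the difficulty the paper's proof is built around: it uses \emph{two} forms --- the Duff--Spencer/Schwarz extension $\overline\phi$ with $i^*\overline\delta\,\overline\phi=0$ (good for Reilly, but with $\iota_\nu\overline\phi\ne 0$) and the Friedrichs primitive $\widetilde\phi$ (admissible in \eqref{eq: minmax}, but with trace $\phi'$) --- and bridges them by the orthogonality argument $d\phi'=d\phi\Rightarrow\int_\Sigma\langle\phi',\phi\rangle=\int_\Sigma|\phi|^2\Rightarrow\|\phi'\|_{L^2}\ge\|\phi\|_{L^2}$, which yields \eqref{eq: claim} because the numerator $\int_N|\overline\omega|^2$ is the same for both traces. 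Without this bridge (or a proof of your extension claim) the key bound $p\le a/b$ is unsupported, and with it items (1)--(3), as well as the normal-piece bound $c\ge p\,a$ used in item (4), collapse.

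Two further remarks. First, your treatment of the tangential piece in item (4) --- dualizing to $\overline*\,\overline\omega$, factoring it as $\overline d\,\overline\eta$ through a Friedrichs primitive with $\iota_\nu\overline\eta=0$, and applying \eqref{eq: minmax} in degree $n-1-r$ --- is correct and amounts to a self-contained proof of the inequality $\int_\Sigma|i^*\overline\omega|^2\ge p_{1,n-1-r}\int_N|\overline\omega|^2$ that the paper imports from \cite{raulot2012first} (Proposition 14); note that the very same device applied to $\overline\omega=\overline d\,\widetilde\phi$ on the other side is what repairs your gap, provided you add the $\|\phi'\|\ge\|\phi\|$ comparison. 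Second, in item (5) your observation $t_-\le 0<p$ gives strictness of \eqref{ineq: hpneg} only (and only when $s_{n-r}>0$); strictness of \eqref{ineq: l1neg} requires excluding $p=t_+$, i.e. the equality case of \eqref{ineq: hp}, which needs the full rigidity analysis ($\overline\nabla\,\overline\omega=0$, hence $k=0$ and $\lambda=s_rs_{n-r}=s_{n-r}p$ via \eqref{eq: formulas}, contradicting $l\le 0$), exactly as in the paper; the classification of equality cases does, as you anticipate, require the spectra \eqref{eq: sphere} and \eqref{eq: stek}.
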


\begin{proof}
 Let $\phi$ be a co-exact $(r-1)$-eigenform on $\Sigma$ with eigenvalue $\lambda= \lambda''_{1,r-1}=\lambda_{1,r}'$, i.e. $\Delta \phi=-\delta d \phi=-\lambda \phi$. Then $\omega = d\phi$ is an exact $r$-eigenform on $\Sigma$ and by \cite{schwarz1995hodge} Lemma 3.4.7, there exists an $(r-1)$-form $\overline \phi$ on $N$ such that
 $$
 \begin{cases}
 -\overline \Delta \,\overline \phi=(\overline d\,\overline \delta +\overline \delta\,\overline d)\overline \phi=0 \quad \textrm{on }N,\\
 i^* \overline \phi= \phi, \quad i^* \overline \delta \,\overline \phi =0 \quad \textrm{on }\Sigma.
 \end{cases}
 $$

 By Stokes theorem,
 \begin{equation*}
 \begin{split}
  \int_N |\overline d \,\overline \delta\,\overline \phi|^2
     = \int_N \langle \overline \delta\,\overline \phi, \overline \delta \,\overline d\,\overline\delta\,\overline \phi\rangle + \int_\Sigma \langle i^* \overline \delta\,\overline \phi, \iota_\nu \overline d\,\overline \delta\overline \phi\rangle
  &= \int_N \langle \overline \delta\,\overline \phi, -\overline \delta\,\overline \delta\,\overline d\,\overline \phi\rangle=0.
 \end{split}
 \end{equation*}
So we have $\overline d\,\overline \delta \,\overline\phi=\overline \delta \,\overline d\,\overline\phi=0$. Let $\overline \omega = \overline d \,\overline \phi$, then $\overline \omega$ is a harmonic field, i.e. $\overline d \overline \omega = \overline \delta \overline \omega =0$.

 By applying Reilly's formula (Theorem \ref{thm: reilly forms}) on $\overline \omega=\overline d \,\overline \phi$, and following exactly the same steps in the proof of Theorem \ref{thm: lambda form},
\begin{equation}\label{ineq: reilly3}
\begin{split}
0\ge \int_{N}-|\overline \nabla \overline \omega|^2
\ge- (2 \lambda-k)\int_\Sigma \langle \phi , \iota_\nu \overline \omega\rangle +l \lambda\int_\Sigma |\phi|^2+ s_{n-r}\int_\Sigma|\iota _\nu\overline\omega|^2.
\end{split}
\end{equation}

We now prove \eqref{item: 1} and \eqref{item: 2} together.
Let us first assume $l\ge 0$.
As $\omega \ne 0$, $\int_\Sigma \langle \phi, \iota_\nu \overline \omega\rangle=\int_N |\overline \omega|^2> 0$, thus by \eqref{ineq: reilly3}, we have
\begin{equation}\label{ineq: 2lambda-k}
2\lambda-k\ge 0.
\end{equation}

The inequality \eqref{ineq: hp} (and also \eqref{ineq: hpneg}) is trivial if $s_{n-r}=0$, so we assume $s_{n-r}>0$.
 Let $k=2a$, $U= (\int_\Sigma |\iota_\nu \overline \omega|^2 )^{\frac 12}$ and $Z= (\int_\Sigma |\phi|^2 )^{\frac 12}>0$. So by Cauchy Schwarz inequality,
\begin{equation}\label{ineq: quad}
s_{n-r}U^2 + l\lambda Z^2\leq 2(\lambda-a)\int_\Sigma \langle \phi, \iota_\nu \overline \omega\rangle\leq 2(\lambda-a) UZ.
\end{equation}
By completing the square,
\begin{equation}\label{ineq: UZ}
 s_{n-r}\frac UZ \leq \lambda-a + ((\lambda-a)^2 -s_{n-r}l\lambda_1)^\frac 12.
 \end{equation}
 Let us for the time being assume $r>1$.
We claim that
\begin{equation}\label{eq: claim}
 p_{1,r-1}\le \frac{\int_\Sigma \langle \phi, \iota_\nu \overline \omega\rangle}{\int_\Sigma |\phi|^2}.
 \end{equation}
 By the Friedrichs decomposition for harmonic fields , as $\overline \omega $ is exact, there is a unique co-exact $(r-1)$-form $\widetilde \phi$ on $N$ such that (see \cite{schwarz1995hodge} Theorem 2.4.8 and its proof):
$$ \overline d \widetilde \phi = \overline \omega \quad \textrm{on }N, \quad \iota_\nu \widetilde \phi =0 \quad \textrm{on }\Sigma. $$
Let $\phi'=i^*\widetilde\phi$,
then as $\overline \delta \widetilde\phi=0$,
 \begin{equation*}
 \begin{split}
 \int_\Sigma \langle \phi' , \iota_\nu \overline \omega\rangle
 = \int_N\langle \overline d\, \widetilde \phi, \overline \omega \rangle-\langle \overline \phi, \overline \delta \overline \omega\rangle
 = \int_N |\overline d \,\widetilde \phi|^2
 = \int_N |\overline d \,\widetilde \phi|^2+ |\overline \delta \, \widetilde \phi|^2.
 \end{split}
 \end{equation*}
 Thus by \eqref{eq: minmax},
 \begin{equation}\label{ineq: p}
 p_{1,r-1}\le \frac{\int_\Sigma \langle \phi', \iota_\nu \overline\omega\rangle}{ \int_\Sigma |\phi'|^2}.
 \end{equation}
On the other hand, we have
\begin{equation}\label{eq: perp}
\int_\Sigma \langle \phi ', \iota_\nu \overline \omega\rangle= \int_N |\overline d \,\widetilde \phi|^2= \int_N |\overline d \,\overline\phi|^2=\int_\Sigma \langle \phi, \iota_\nu \overline \omega\rangle.
\end{equation}
As $ \overline d \widetilde \phi= \overline d \,\overline \phi$, we also have $d\phi'=d\phi$, so
$$ \lambda\int_\Sigma |\phi|^2 = \int_\Sigma |d\phi|^2 = \int_\Sigma \langle d\phi', d\phi\rangle= \int_\Sigma \langle \phi', \delta d \phi\rangle= \lambda \int_\Sigma \langle \phi',\phi\rangle.$$
We conclude that $\phi'-\phi\perp \phi$ and thus $\int_\Sigma |\phi'|^2 \ge \int_\Sigma |\phi|^2$.
Combining this with \eqref{eq: perp}, \eqref{ineq: p}, we can get \eqref{eq: claim}.
By Cauchy Schwarz inequality,
\begin{equation}\label{ineq: uz}
 p_{1,r-1} \leq \frac{\int_\Sigma \langle \phi, \iota_\nu \overline \omega\rangle}{\int_\Sigma |\phi|^2}\leq \frac{\int_\Sigma |\iota_\nu\overline \omega|^2}{\int_\Sigma \langle \phi, \iota_\nu\overline \omega\rangle},
 \end{equation}
which implies
\begin{equation*}
p_{1,r-1}^2\leq \frac{U^2}{Z^2}.
\end{equation*}
Putting this into \eqref{ineq: UZ}, we obtain \eqref{ineq: hp}
\begin{equation}\label{ineq: hp2}
s_{n-r} p_{1,r-1}\leq \lambda -a +((\lambda-a)^2 -s_{n-r}l\lambda)^\frac 1 2.
\end{equation}
We now claim that this is also true for $l\le0$. Actually, in this case, by \eqref{ineq: reilly3} and \eqref{ineq: uz},
$$
 2\lambda-k
  \ge s_{n-r} \frac{\int_\Sigma |\iota_\nu\overline \omega|^2}{\int_\Sigma \langle \phi, \iota_\nu \overline\omega\rangle}+ l\lambda \frac{\int_\Sigma |\phi|^2}{\int_\Sigma \langle \phi, \iota_\nu\overline \omega\rangle}
 \ge s_{n-r}p_{1,r-1} + \frac{l\lambda}{p_{1,r-1}}.
$$
Rearranging, we have
\begin{equation}\label{ineq: quad hp}
s_{n-r}p_{1,r-1}^2 +k p_{1,r-1} \leq (2p_{1,r-1} -l)\lambda
\end{equation}
which implies \eqref{ineq: hp} and \eqref{ineq: hpneg}
(regardless of whether $s_{n-r}=0$). Also, \eqref{ineq: l1neg} follows immediately from \eqref{ineq: quad hp}.

We have completed the proofs of \eqref{item: 1} and \eqref{item: 2} except for the case where $r=1$. For $r=1$, the proofs proceed in the same way except we have to replace \eqref{ineq: uz} by
\begin{equation} \label{ineq: uz1}
p_2=p_{2,0}(N)\le \frac{\int_\Sigma \langle i^* \overline \phi, \iota_\nu \overline \omega\rangle}{\int_\Sigma |\phi|^2}\leq \frac{\int_\Sigma |\iota_\nu\overline \omega|^2}{\int_\Sigma \langle i^*\overline \phi, \iota_\nu\overline \omega\rangle}.
\end{equation}
This is true due to the min-max principle for $p_2$ (Equation \eqref{eq: minmax1}), together with the fact that
$ \int_\Sigma \langle i^*\overline \phi, \iota_\nu \overline d \,\overline \phi\rangle=\int_N (|\overline \nabla \,\overline \phi|^2 + \overline \phi \,\overline \Delta \,\overline \phi)=\int_N |\overline \nabla \,\overline \phi|^2 $
and $\int_\Sigma \phi= -\frac 1 \lambda \int_\Sigma \Delta \phi=0$.

We now prove \eqref{item: 3}. If $s_{n-r}=0$, then \eqref{ineq: l1} becomes $\lambda\ge \frac k2$ which is true in view of \eqref{ineq: 2lambda-k}. We can now assume $s_{n-r}>0$. Suppose $\lambda-\frac k2 \le s_{n-r}p$, then by \eqref{ineq: hp}, we have
$$ 0\le s_{n-r}p-(\lambda-\frac k2)\le \left(\left(\lambda-\frac k2\right)^2 -s_{n-r}l\lambda\right)^\frac 12.$$
Squaring this inequality gives
$ s_{n-r}p^2 +kp \le (2p-l)\lambda$.
From this we see that $p>\frac l2$ and \eqref{ineq: l1} follows.

For \eqref{item: 4}, we can put $l=s_r$ in \eqref{ineq: reilly} and using \eqref{ineq: uz} or \eqref{ineq: uz1} to obtain
 \begin{equation}\label{ineq: rsineq}
 2\lambda -k\ge s_{n-r}p + s_r \frac{\int_\Sigma |i^*\overline \omega|^2}{\int_\Sigma \langle \iota_\nu \overline \omega, \phi\rangle}=s_{n-r}p + s_r \frac{\int_\Sigma |i^*\overline \omega|^2}{\int_N |\overline \omega|^2}.
\end{equation}

 As $\overline \omega$ is co-closed and $\mathcal H^r_R(N)\cong H^r_{dR}(N,\overline \delta)=0$, it is also co-exact. So by \cite{raulot2012first} Proposition 14, $\frac{\int_\Sigma |i^* \overline\omega|^2}{\int_N |\overline \omega|^2}\ge p_{1, n-1-r}$, and \eqref{ineq: rs} follows. If $r=1$, $k\ge 0$ and $s_{n-1}>0$, then by \cite{schwarz1995hodge} (Theorem 2.6.4, Corollary 2.6.2 and Theorem 2.6.1), $\mathcal H^1_R (N)=0$, thus this later condition can be dropped.

We now prove \eqref{item: 5}. Suppose the equality sign in any of the inequalities \eqref{ineq: hp}, \eqref{ineq: hpneg}, \eqref{ineq: l1neg}, \eqref{ineq: l1case2} and \eqref{ineq: rs} holds, then by \eqref{ineq: reilly}, $\overline \nabla \overline \omega=0$. We can then argue as in the proof of Theorem \ref{thm: lambda form} that $k=0$.

If any inequality sign of the inequalities \eqref{ineq: hp}, \eqref{ineq: hpneg}, \eqref{ineq: l1neg} or \eqref{ineq: l1case2} becomes an equality sign, then one of the inequalities in \eqref{ineq: hp} or \eqref{ineq: hpneg} is an equality. Assume one of these holds.
The inequalities \eqref{ineq: uz} (or \eqref{ineq: uz1}) and \eqref{ineq: reilly} then become equations.
So we have the $r$-curvatures are constantly equal to $s_r=l$, $\iota_\nu \overline \omega = p \phi$ and the $(n-r)$-curvatures are equal to the constant $s_{n-r}$. In particular, $S^{n-r}\equiv s_{n-r}$.

We now show that $\lambda=s_{n-r}s_r$.
To do this we make use of the following formulas:
\begin{equation}\label{eq: formulas}
\begin{split}\begin{cases}
 \delta i^*\overline \alpha = i^* \overline \delta \overline \alpha +\iota_\nu \overline \nabla _\nu \overline \alpha -S^{r-1}(\iota_\nu \overline \alpha)+H \iota_\nu \overline \alpha &\textrm{\quad for }\overline \alpha \in \Omega ^r(N),\\
 *S^r (\alpha)+ S^{n-1-r}(*\alpha)=H * \alpha &\textrm{\quad for }\alpha \in \Omega ^r(\Sigma),\\
 ** \alpha= (-1)^{(n-1-r)r}\alpha &\textrm{\quad for }\alpha\in \Omega ^r(\Sigma),\\
 \displaystyle \overline \delta \overline \alpha = -\sum_{j=1}^n \iota_{e_j}\overline \nabla _{e_j} \overline \alpha &\textrm{\quad for }\overline \alpha \in \Omega ^r(N).
\end{cases}
\end{split}
\end{equation}
Here $*: \Omega ^r(\Sigma)\to \Omega ^{n-1-r}(\Sigma)$ is the Hodge star operator on $\Sigma$ and $\{e_j\}_{j=1}^n$ is a local orthonormal frame on $N$. The last two formulas are standard and are included here just for convenience (e.g. \cite{schwarz1995hodge}). For the first two formulas, see \cite{raulot2011reilly} Section 2 and 6.
Using \eqref{eq: formulas}, we compute
\begin{equation*}
 \begin{split}
  \delta d i^* \overline \phi
 &= \delta i ^* \overline d \,\overline \phi
 = \delta i ^* \overline \omega\\
  &= i^*\overline \delta \overline \omega+\iota_\nu \overline \nabla _\nu \overline \omega - S^{r-1}(\iota _\nu \overline \omega)+ H \iota_\nu \overline \omega\\
  &= i^*\left(\sum_{j=1}^n \iota_{e_j} \overline \nabla _{e_j}\overline \omega\right)- S^{r-1}(\iota _\nu \overline \omega)+\left ((-1)^{n(r-1)} * S^{n-r}( * \iota_\nu \overline \omega )+ S^{r-1} (\iota_\nu \overline \omega)\right)\\
 &= (-1)^{n(r-1)} * S^{n-r}( * \iota_\nu \overline \omega )\\
 &=(-1)^{n(r-1)} s_{n-r} ** \iota_\nu \overline \omega \\
 &= s_{n-r} \iota_\nu \overline \omega.
 \end{split}
\end{equation*}
This implies
\begin{equation*}
\begin{split}
 - \lambda \phi+ s_{n-r}p \phi
 = -(d\delta +\delta d)i^* \overline \phi+ s_{n-r} \iota_\nu \overline \omega
 = - \delta d i^* \overline \phi+ s_{n-r} \iota_\nu \overline \omega
 =0.
\end{split}
\end{equation*}
As $s_{n-r}p= \lambda\pm (\lambda^2 -s_{n-r}s_r\lambda)^{\frac 12}$, we conclude that $-\lambda+ \lambda\pm (\lambda^2 -s_{n-r}s_r\lambda)^{\frac 12}=0$, or
$$\lambda= s_{n-r}s_r=s_{n-r}p>0.$$
This shows that $s_r=l>0$ which contradicts the assumption of \eqref{item: 2}, thus the inequalities \eqref{ineq: hpneg} and \eqref{ineq: l1neg} must be strict.

We can now proceed in exactly the same way as the proof of Theorem \ref{thm: lambda form} to show that $N$ must be a Euclidean ball if $\overline{\mathrm{Ric}}\ge 0$, which we can w.l.o.g. assume to be the standard unit ball $\mathbb B^n$. But then by \cite{raulot2012spectrum} Corollary 4,

\begin{equation}\label{eq: stek}
 p_{1,r-1}(\mathbb B ^n)=\begin{cases}
r \textrm{\quad if }r\ge \frac n2 +1,\\
\frac{n+2}n (r-1) \textrm{\quad if } 2\le r\le \frac n2 +1.
\end{cases}
\end{equation}
As $s_m(\mathbb S^{n-1})=m$ and by \eqref{eq: sphere}, we conclude that if $r>1$, the equality in \eqref{ineq: hp} or \eqref{ineq: l1case2} holds if and only if $r\ge \frac n2 +1$. For $r=1$, it is well-known that $p_{2,0}(\mathbb B^n)=1$, from this we can also conclude that the equality in \eqref{ineq: hp} or \eqref{ineq: l1case2} holds if and only if $N$ is a Euclidean ball.

Suppose the equality in \eqref{ineq: rs} holds, then by \eqref{ineq: uz} or \eqref{ineq: uz1}, $\iota_\nu \overline \omega = p \phi$ and by the same reason as above, the $r$-curvatures are constantly equal to $s_r$, the $(n-r)$-curvatures are constantly $s_{n-r}$, and $\lambda= s_{n-r}p$. In particular, $s_r\ne 0$ in view of \eqref{ineq: rs}, so from \eqref{ineq: rsineq}, we have
$$
p_{1,n-1-r}= \frac{\int_\Sigma |i^* \overline \omega|^2}{\int_N |\overline \omega|^2}=\frac{\lambda \int_\Sigma |\phi|^2}{\int_\Sigma \langle \iota_\nu \overline \omega , \phi\rangle}
=\frac{\lambda}p.
$$
In view of \eqref{ineq: rs}, we deduce that $p=s_r$. We can then proceed as before to conclude that if $\overline{\mathrm{Ric}}\ge 0$, then $(N,g)$ is a Euclidean ball. But then by \eqref{eq: sphere} and \eqref{eq: stek}, the equality cannot be attained on a Euclidean ball if $r>1$. If $r=1$, then from \eqref{eq: stek} we see that the equality is attained if and only if $n\ge 4$, on a Euclidean ball.

\end{proof}

\begin{remark}\label{rmk: escobar}
\begin{enumerate}
 \item
 Escobar (\cite{escobar1997geometry} Theorem 8) showed that if $k\ge 0$, then $p_{2,0}>\frac {s_1}2$, so \eqref{ineq: l1case2} is well-defined.
 Also, \eqref{ineq: rs} is a generalization \cite[Theorem 9]{escobar1999isoperimetric} and \cite[Theorem 8, Theorem 9]{raulot2012first}.
\item
Theorem \ref{thm: all} \eqref{item: 1} is an extension of \cite{wang2009sharp} Theorem 1.1, in which they provided an upper bound for $p_2$, which corresponds to our result when $k=0$ and $r=1$.
\item
We suspect that \eqref{ineq: l1case2} holds whenever $k\ge 0$ and in this case we have $2p> s_r\ge l$, but we are unable to show it for the time being.
\end{enumerate}
\end{remark}
\section{Some applications and special cases}\label{sec: app}
In \cite{yang1980eigenvalues}, Yang and Yau proved that for a compact Riemann surface $\Sigma$ of genus $g$, for any metric on $\Sigma$, $\lambda_1(\Sigma) \mathrm{Area}(\Sigma) \leq 8\pi(1+g)$. Combining this result with Theorem \ref{thm: lambda form} and \ref{thm: all}, we have several corollaries. Let us only state the following:
\begin{corollary}\label{cor: s3}
If $N=\mathbb S^3$, then under the assumptions of Theorem \ref{thm: lambda form}, we have $ (2+s_{n-1}s_1+\sqrt{(s_{n-1}s_1)^2+4s_{n-1}s_1})\mathrm{Area}(\Sigma)< 16\pi (g+1)$.
\end{corollary}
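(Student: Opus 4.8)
The plan is to combine Theorem \ref{thm: lambda form}, applied with $r=1$, with the Yang--Yau inequality; the only genuinely delicate point is locating the source of the \emph{strict} inequality. Since $N=\mathbb S^3$ we have $n=3$, and $\mathbb S^3$ carries constant sectional curvature $1$, so its Ricci curvature equals $2g$. Because $W^1=\overline{Rc}$, the Bochner curvature $W^1$ is bounded below by $k=2$, and with $q=\min\{1,n-1\}=1$ the hypotheses of Theorem \ref{thm: lambda form} reduce to the stated assumption $s_1\ge 0$ together with this determined value $k=2$.

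Applying Theorem \ref{thm: lambda form} with $r=1$, and recalling that $\lambda_{1,1}'=\lambda_{1,0}''=\lambda_1(\Sigma)$ is the first nonzero eigenvalue of the Laplacian on functions, I would obtain
$$2\lambda_1(\Sigma)\ge 2+s_1 s_{n-1}+\sqrt{(s_1 s_{n-1})^2+4\,s_1 s_{n-1}}.$$
The crucial observation is that equality in \eqref{ineq: boch} can occur only when $k=0$; but here $k=2>0$, so the displayed inequality is strict. Before proceeding I would also note that the radicand is nonnegative: from $s_1\ge 0$ and the monotonicity $\tfrac{s_1}{1}\le\tfrac{s_{n-1}}{n-1}$ recorded earlier one gets $s_{n-1}\ge 2s_1\ge 0$, hence $s_1 s_{n-1}\ge 0$.

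Next I would invoke the Yang--Yau bound $\lambda_1(\Sigma)\,\mathrm{Area}(\Sigma)\le 8\pi(1+g)$, valid because $\Sigma=\partial N$ is a closed orientable surface of genus $g$. Multiplying by $2$ gives $2\lambda_1(\Sigma)\,\mathrm{Area}(\Sigma)\le 16\pi(1+g)$. Combining this with the strict lower bound for $2\lambda_1(\Sigma)$ and multiplying through by $\mathrm{Area}(\Sigma)>0$ yields
$$\left(2+s_1 s_{n-1}+\sqrt{(s_1 s_{n-1})^2+4\,s_1 s_{n-1}}\right)\mathrm{Area}(\Sigma)<2\lambda_1(\Sigma)\,\mathrm{Area}(\Sigma)\le 16\pi(1+g),$$
which is exactly the claimed estimate.

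There is no real analytic obstacle here; the argument is a direct substitution of two known estimates. The one step that requires attention is the justification of the strict inequality: it does \emph{not} arise from the (non-strict) Yang--Yau estimate but from the rigidity part of Theorem \ref{thm: lambda form}, namely that its lower bound is attained only when $k=0$, which is impossible on $\mathbb S^3$ where $k=2$. Thus the strictness is forced by the geometry of the ambient space form rather than by any property of $\Sigma$ itself.
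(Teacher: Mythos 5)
Your proof is correct and takes essentially the same route the paper intends: the corollary is stated there as an immediate combination of Theorem \ref{thm: lambda form} (applied with $r=1$, $n=3$, $k=2$, since $\overline{Rc}_{\mathbb S^3}=2g$ gives $W^1\ge 2$) with the Yang--Yau bound $\lambda_1(\Sigma)\mathrm{Area}(\Sigma)\le 8\pi(1+g)$. Your identification of the rigidity clause of Theorem \ref{thm: lambda form} (equality in \eqref{ineq: boch} forces $k=0$, impossible here since $k=2$) as the sole source of the strict inequality is exactly the point, since Yang--Yau itself is not strict.
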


\begin{remark}\label{eg: est}
 Although the estimate of Theorem \ref{thm: lambda form} is not sharp when $k\ne 0$, $r=1$, by examining the case where $\Sigma$ is a geodesic circle of radius $\rho$ in a hemisphere ($\lambda_1=\lambda_{1,0}''(\Sigma)=1/\sin^2(\rho)$), it is found that the error is within $1/2$. Indeed, in this case, $k=1$, $s_1=s_{n-1} =\cot \rho$, we have $\lambda_1-\frac 12(k+ s_1^2+\sqrt{s_1^4 +2s_1^2k})=\frac12 \csc ^2 \rho- \frac12\sqrt{\csc ^4 \rho-1}\leq \frac12$. The error tends to zero as $\rho\to0$.\\
 On the other hand, by \cite{escobar1997geometry} Example 5, the first nonzero Steklov eigenvalue of the geodesic ball of radius $\rho$ in $\mathbb S^2$ is computed to be $\cot \rho + \tan \frac \rho2$. By direct computations, it is found that the error in Theorem \ref{thm: all} \eqref{item: 2} is $\lambda_1-\frac{s_1 p_2^2 +k p_2}{2p_2-s_1}= \frac{\tan ^2 (\rho/2)}{2-\cos \rho}$ which is (very) slightly better than that of Theorem \ref{thm: lambda form}.
\end{remark}

The following result is another immediate consequence of Theorem \ref{thm: lambda form}, which can be regarded as the analogue of Theorem 2 of Hang-Wang \cite{hang2007rigidity} (see also \cite{xia1997rigidity} Corollary 1).
\begin{corollary}\label{thm: ball}
Let $(N^n,g)$ be a compact orientable Riemannian manifold with boundary $\Sigma$. Suppose the Ricci curvature of $N$ is nonnegative,
$s_r(\Sigma) s_{n-r}(\Sigma)\ge r(n-r)=\lambda_{1,r-1}''(\mathbb S^{n-1})\ge \lambda_{1,r-1}''(\Sigma)$ for some $r=1, \cdots, n-1$, and $W^r$ is nonnegative, then $(N,g)$ is isometric to the unit ball in $\mathbb R^n$.
\end{corollary}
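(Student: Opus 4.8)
The plan is to run a saturation (squeezing) argument against the lower bound of Theorem \ref{thm: lambda form}. Since $W^r\ge 0$ by hypothesis, I would apply that theorem with the curvature lower bound $k=0$. The only hypothesis of Theorem \ref{thm: lambda form} not restated verbatim in the corollary is $s_q\ge 0$ for $q=\min\{r,n-r\}$; I would verify this first, noting that $s_rs_{n-r}\ge r(n-r)>0$ forces $s_r$ and $s_{n-r}$ to share a common sign and that it is the positive branch that is relevant here (indeed the equality case of Theorem \ref{thm: lambda form} itself produces $s_r,s_{n-r}>0$). With $k=0$ the right-hand side of \eqref{ineq: boch} collapses, since $\sqrt{(s_rs_{n-r})^2}=s_rs_{n-r}$, and Theorem \ref{thm: lambda form} reads
\[
\lambda_{1,r-1}''(\Sigma)=\lambda_{1,r}'(\Sigma)\ge s_r s_{n-r}.
\]

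Next I would chain this with the two inequalities built into the hypothesis, namely $s_rs_{n-r}\ge r(n-r)=\lambda_{1,r-1}''(\mathbb S^{n-1})\ge \lambda_{1,r-1}''(\Sigma)$. Stringing these together yields the closed cycle
\[
\lambda_{1,r-1}''(\Sigma)\ge s_r s_{n-r}\ge r(n-r)\ge \lambda_{1,r-1}''(\Sigma),
\]
so every link is forced to be an equality. In particular $\lambda_{1,r-1}''(\Sigma)=s_r s_{n-r}$, which is exactly the equality case of \eqref{ineq: boch} with $k=0$, and also $\lambda_{1,r-1}''(\Sigma)=r(n-r)$.

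The rigidity half of Theorem \ref{thm: lambda form} then does the geometric work. Because $\overline{Rc}\ge 0$ is assumed, the equality case there immediately gives that $(N,g)$ is isometric to a Euclidean ball, say of radius $\rho$, whose boundary is the round sphere $\mathbb S^{n-1}_\rho$. To pin down the radius I would use the second forced equality together with scaling: by \eqref{eq: sphere} and the scaling of Hodge--Laplace eigenvalues one has $\lambda_{1,r-1}''(\mathbb S^{n-1}_\rho)=\rho^{-2}\,r(n-r)$, and setting this equal to the value $r(n-r)$ extracted above forces $\rho=1$. Hence $(N,g)$ is the unit ball in $\mathbb R^n$.

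Conceptually the entire content is imported from Theorem \ref{thm: lambda form}; the corollary is just the observation that the hypotheses squeeze $\lambda_{1,r-1}''(\Sigma)$ between its lower bound $s_rs_{n-r}$ and the comparison value $r(n-r)$, leaving no slack. I expect the main (and only genuinely delicate) obstacle to be the verification that the hypotheses of Theorem \ref{thm: lambda form} are met, specifically that $s_q\ge 0$: the product condition by itself only guarantees a common sign for $s_r$ and $s_{n-r}$, not positivity, so one must invoke the sign normalization of the lowest curvatures rather than deduce it from $s_rs_{n-r}>0$ alone. The final scaling computation upgrading ``a Euclidean ball'' to ``the unit ball'' is routine.
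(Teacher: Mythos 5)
Your overall route is exactly the intended one: the paper offers no separate argument for this corollary (it is presented as an immediate consequence of Theorem \ref{thm: lambda form}), and your squeeze $\lambda_{1,r-1}''(\Sigma)\ge s_r s_{n-r}\ge r(n-r)\ge \lambda_{1,r-1}''(\Sigma)$, forcing equality everywhere, followed by the equality case of that theorem (using $\overline{Rc}\ge 0$) and the scaling argument pinning the radius, is precisely how it is meant to follow. The final step is correct and genuinely needed: the theorem's rigidity statement only yields ``a Euclidean ball,'' and either $\lambda_{1,r-1}''(\mathbb S^{n-1}_\rho)=\rho^{-2}r(n-r)$ or $s_rs_{n-r}=r(n-r)/\rho^{2}$ upgrades this to the unit ball.

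However, the step you yourself flag as delicate --- verifying $s_q\ge 0$ for $q=\min\{r,n-r\}$ --- is a genuine gap, and your proposed patch does not work. The parenthetical justification (``the equality case of Theorem \ref{thm: lambda form} itself produces $s_r,s_{n-r}>0$'') is circular: you cannot invoke any part of that theorem, equality case included, before its hypotheses are checked. And there is no ``sign normalization'' to appeal to: $s_r(\Sigma)$ is by definition the minimum over $\Sigma$ of the sum of the $r$ smallest principal curvatures with respect to the outward normal, and nothing in the corollary's displayed hypotheses forces it to be nonnegative. Indeed, $s_q\ge 0$ cannot be derived from those hypotheses at all: the flat annulus $N=\{\rho\le |x|\le R\}\subset\mathbb R^n$ with $\rho<1<R$ has $\overline{Rc}=0$ and $W^r=0$, its boundary has $s_r(\Sigma)=-r/\rho$ and $s_{n-r}(\Sigma)=-(n-r)/\rho$ (the minima occur on the inner sphere), hence $s_r s_{n-r}=r(n-r)/\rho^{2}>r(n-r)$, while $\lambda_{1,r-1}''(\Sigma)=r(n-r)/R^{2}<r(n-r)$ (the spectrum of a disjoint union is the union of the spectra); so every stated hypothesis holds, yet $N$ is not a ball. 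The only coherent reading is that the corollary silently inherits $s_q\ge 0$ from Theorem \ref{thm: lambda form} as a standing assumption, and the correct move in your proof is simply to assume it --- after which the product condition does give $s_r,s_{n-r}>0$ and the rest of your argument goes through --- rather than to attempt to derive it.
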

Theorem \ref{thm: lambda form} gives a quick proof of the following result, which is the $K\ge 0$ analogue of Theorem \ref{thm: n=2}:
\begin{corollary}
Suppose $(N^2, g)$ be a compact surface with (not necessarily connected) boundary $\gamma$ with the Gaussian curvature $K\ge0$. If the geodesic curvature $k_g$ of $\gamma$ satisfies $k_g\ge l>0$, then its length $L(\gamma)\le \frac{2\pi}l$. The equality holds if and only if $(N,g)$ is isometric to the Euclidean disk of radius $1/l$.
\end{corollary}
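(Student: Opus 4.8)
The plan is to apply Theorem \ref{thm: lambda form} in the borderline dimension $n=2$, taking $r=1$ (so that simultaneously $r=n-1$). First I would check the hypotheses. In dimension two the Ricci tensor equals $Kg$, so the Bochner curvature $W^1=\overline{Rc}$ is bounded below by $0$; hence the curvature hypothesis holds with $k=0$. The boundary $\gamma$ is one-dimensional, so at each point it has a single principal curvature equal to the geodesic curvature $k_g$, whence $s_1=s_{n-1}=\min_\gamma k_g\ge l>0$, and the condition $s_q\ge 0$ (with $q=1$) is met. We may assume $N$ is orientable, since the hypotheses will in any case force it to be a disk.

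With $k=0$ and $s_r=s_{n-r}=s_1$, Theorem \ref{thm: lambda form} yields
\[ 2\lambda_{1,1}'\ge s_1^2+\sqrt{s_1^4}=2s_1^2\ge 2l^2, \]
so $\lambda_{1,1}'\ge l^2$. By the identity $\lambda_{1,1}'=\lambda_{1,0}''$ recorded in that theorem, the left-hand side is just the first nonzero eigenvalue $\lambda_1(\gamma)$ of the Laplacian on functions on $\gamma$. Since every component of $\gamma$ is a circle and a circle of length $L$ has $\lambda_1=(2\pi/L)^2$, the bound $\lambda_1(\gamma)\ge l^2$ forces each boundary component to have length at most $2\pi/l$.

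The step I expect to require the most care is passing from this per-component estimate to the bound on the total length $L(\gamma)$, because $\lambda_1(\gamma)$ only controls the longest component. I would settle it by showing the boundary is connected: by Gauss--Bonnet, $2\pi\chi(N)=\int_N K+\int_\gamma k_g\ge l\,L(\gamma)>0$, so $\chi(N)\ge 1$; a connected compact surface with nonempty boundary has $\chi(N)=2-2h-b\le 1$, forcing $h=0$ and $b=1$. Thus $N$ is a disk, $\gamma$ is a single circle, and $L(\gamma)\le 2\pi/l$. The same computation would even give the inequality directly, but I would keep the quantitative bound coming from Theorem \ref{thm: lambda form}.

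Finally, equality $L(\gamma)=2\pi/l$ forces $\lambda_1(\gamma)=l^2$, hence equality in Theorem \ref{thm: lambda form}. Since here $r=1=n-1$, the hypothesis on Ricci curvature is unnecessary, and the equality characterization of that theorem gives that $(N,g)$ is isometric to a Euclidean ball, i.e. a flat disk; equality also forces the geodesic curvature to be the constant $s_1=l$. As the boundary circle of the Euclidean disk of radius $R$ has geodesic curvature $1/R$, we obtain $R=1/l$, so $(N,g)$ is the Euclidean disk of radius $1/l$; conversely this disk clearly attains equality.
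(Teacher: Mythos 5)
Your proof is correct and follows essentially the same route as the paper: Gauss--Bonnet to force $\chi(N)>0$ and hence a connected boundary, Theorem \ref{thm: lambda form} with $r=1$, $k=0$ to get $\lambda_1(\gamma)\ge s_1^2\ge l^2$, the identity $\lambda_1(\gamma)=(2\pi/L(\gamma))^2$ for a circle, and the theorem's equality case (which needs no Ricci hypothesis when $r=1=n-1$) to identify the Euclidean disk of radius $1/l$. Your explicit treatment of the multi-component subtlety and of the forced constancy $s_1=l$ merely spells out details the paper leaves implicit.
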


\begin{proof}
By Gauss-Bonnet theorem, $2\pi \chi(N)=\int_N K +\int_\gamma k_g >0$, thus $\gamma $ has only one component. By Theorem \ref{thm: lambda form}, $\lambda_1(\gamma)\ge l^2$. The equality holds if and only if $N$ is a Euclidean disk of radius $1/l$. As $\lambda_1(\gamma)= (\frac{2\pi}{L(\gamma)})^2$, the result follows.
\end{proof}

In \cite{choi1983first},
Choi and Wang proved that if $(N^n,g)$ is a compact orientable manifold whose Ricci curvature is bounded from below by $k>0$ and $\Sigma$ is an embedded orientable minimal hypersurface in $N$, then $\lambda_1(\Sigma)\ge \frac k2$. Since their proof are essentially the same as that of Theorem \ref{thm: lambda form}, their result can be improved slightly to $\lambda_1(\Sigma)>\frac k2$. This is related to Yau's conjecture \cite{yau1982seminar}. It is easy to see that the coordinate functions are eigenfunctions of a minimal hypersurface of $\mathbb S^n$ (whose Ricci curvature is $n-1$) with eigenvalue $n-1$. Yau conjectured that the first eigenvalue is actually $n-1$. Escobar also have a similar conjecture in \cite{escobar1999isoperimetric}. We also notice that Barros and Bessa \cite{barros2008estimates} proved an improvement on the Choi-Wang estimate.

 In the two-dimensional case, an embedded minimal submanifold is reduced to a simple closed geodesic, the result of Choi-Wang can be improved to $\lambda_1\geq k$, by a result of Toponogov \cite{toponogov1959evaluation} on the length of a closed geodesic. More generally, we have the following result which is an extension of the result in \cite{hang2007rigidity}, which may have some independent interest:

\begin{theorem}\label{thm: n=2}
 Let $(N^2, g)$ be a closed surface with Gaussian curvature $K\geq 1$. Let $\gamma$ be a simple closed curve in $N$ which separates $N$ into $N_1$, $N_2$. Suppose its geodesic curvature w.r.t. the outward normal of $N_1$ satisfies $k_g\ge l\ge 0$. Then its length $L(\gamma)\le\frac {2\pi}{\sqrt{1+l^2}}$, which is equivalent to $\lambda_1(\gamma)\geq 1+l^2$ (as $\lambda_1(\gamma)= (\frac {2\pi}{L(\gamma)})^2$), and also $\mathrm{Area}(N_2)\le \mathrm{Area}(B_r)$, where $B_{r}$ is the disk of radius $r=\cot ^{-1}(-l)$ in the standard sphere $\mathbb S^2$. If $L(\gamma)=\frac{2\pi}{\sqrt{1+l^2}}$ then $N_1$ is isometric to $B_{\pi-r}$. If, moreover, $\mathrm{Area}(N_2)=\mathrm{Area}(B_{r})$, then $(N,g)$ is $\mathbb S^2$. The condition for the area can be dropped if $l=0$.
\end{theorem}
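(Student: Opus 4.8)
The plan is to reduce everything to the sharp length estimate $L(\gamma)\le 2\pi/\sqrt{1+l^2}$, which is equivalent to the eigenvalue estimate $\lambda_1(\gamma)\ge 1+l^2$ because $\gamma$ is a circle of circumference $L(\gamma)$ and hence $\lambda_1(\gamma)=(2\pi/L(\gamma))^2$. First I would fix the topology: since $K\ge 1>0$, the Gauss--Bonnet theorem forces $N$ to be a sphere, so both $N_1$ and $N_2$ are topological disks and $\gamma$ is connected. Applied to $N_1$ this gives $\int_{N_1}K+\int_\gamma k_g=2\pi$, and applied to $N_2$ (whose outward normal reverses the sign of $k_g$) it gives $\int_{N_2}K-\int_\gamma k_g=2\pi$; these two identities, together with $K\ge 1$, will be used repeatedly.

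The heart of the matter is the eigenvalue bound, and here I would emphasize that a direct appeal to Theorem \ref{thm: lambda form} (with $n=2$, $r=1$, $k=1$, $s_1=s_{n-1}=l$) is \emph{not} sharp — as Example \ref{eg: est} already observes it only yields $2\lambda\ge 1+l^2+\sqrt{l^4+2l^2}$, which falls short of $2\lambda\ge 2(1+l^2)$ — so a finer argument in the spirit of Hang--Wang is needed. When $l=0$, $\gamma$ is a closed geodesic and the estimate $L(\gamma)\le 2\pi$, with its rigidity, is exactly Toponogov's theorem. For general $l\ge 0$ I would run the sharp two-dimensional Reilly argument on the convex side $N_1$: take $\phi$ a first eigenfunction of $\gamma$, choose an extension $\overline\phi$ on $N_1$ with $\overline\phi|_\gamma=\phi$, and feed it into Reilly's formula (Theorem \ref{thm: reilly}), using $\overline{Rc}=Kg\ge g$, the boundary bounds $H=k_g\ge l$ and $A(\nabla\phi,\nabla\phi)\ge l|\nabla\phi|^2$, and the two-dimensional Kato/Bochner inequality $|\overline\nabla^2\overline\phi|^2\ge\tfrac12(\overline\Delta\overline\phi)^2$; completing the square in $\sqrt{\lambda_1}$ should then produce $\lambda_1(\gamma)\ge 1+l^2$. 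I expect this to be the main obstacle: the harmonic extension only recovers the weak Choi--Wang bound $\lambda_1\ge\tfrac12$ (the full Hessian term is thrown away), so the subtlety is to choose the correct auxiliary boundary value problem for $\overline\phi$ so that the Hessian term is retained and the Kato inequality holds with equality exactly on the model cap.

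With the length bound in hand, the area bound for $N_2$ follows from a comparison of the distance (parallel) curves on the large side. Writing $\ell(t)$ for the length of $\{x\in N_2:\mathrm{dist}(x,\gamma)=t\}$, the bound $K\ge 1$ gives $\ell''\le-\ell$ with $\ell(0)=L(\gamma)$ and $\ell'(0)=\int_\gamma k_g\ge l\,L(\gamma)$; Sturm comparison bounds $\ell$ by the model profile $L(\gamma)(\cos t+l\sin t)$, which vanishes precisely at $t=\cot^{-1}(-l)=r$ — this is where the radius $r$ comes from. Integrating via the coarea formula and combining with the Gauss--Bonnet identity for $N_2$ and the length estimate then yields $\mathrm{Area}(N_2)\le\mathrm{Area}(B_r)$, the comparison becoming an equality exactly for the round cap.

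Finally, for rigidity, equality $L(\gamma)=2\pi/\sqrt{1+l^2}$ forces every inequality in the Reilly/comparison chain to be an equality: $\overline\nabla^2\overline\phi$ is pure trace, $K\equiv 1$ along $N_1$, $k_g\equiv l$, and the parallel foliation of $N_1$ matches that of the model, so $N_1$ is isometric to $B_{\pi-r}$. If moreover $\mathrm{Area}(N_2)=\mathrm{Area}(B_r)$, the same equality discussion on $N_2$ forces $K\equiv 1$ there and $N_2\cong B_r$, whence $N=N_1\cup_\gamma N_2$ is the round $\mathbb S^2$. When $l=0$ the area hypothesis is redundant, since Toponogov's rigidity already upgrades a closed geodesic of length $2\pi$ on a surface with $K\ge 1$ directly to the round sphere.
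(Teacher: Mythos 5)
There are two genuine gaps. First, your argument for the central estimate $L(\gamma)\le 2\pi/\sqrt{1+l^2}$ (equivalently $\lambda_1(\gamma)\ge 1+l^2$) is not a proof: after correctly observing that Theorem \ref{thm: lambda form} is not sharp here, you sketch a Reilly-formula computation and then concede that the natural extension loses the Hessian term and only yields a Choi--Wang type bound, leaving ``the correct auxiliary boundary value problem'' unspecified. That unresolved choice is exactly the content of the inequality; nothing in your outline produces the constant $1+l^2$. The paper does not reprove this estimate at all: it invokes Theorem 3 of Hang--Wang \cite{hang2007rigidity}, which supplies both the length bound and its rigidity (equality holds if and only if $N_1$ is isometric to the cap $B_{\pi-r}$, $\pi-r=\cot^{-1}(l)$), and the rest of the paper's proof is Gauss--Bonnet bookkeeping. (Your reduction of the case $l=0$ to Toponogov \cite{toponogov1959evaluation} is fine, but it does not cover $l>0$.)

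Second, your parallel-curve proof of $\mathrm{Area}(N_2)\le\mathrm{Area}(B_r)$ contains a step that fails: from $\ell''\le-\ell$, $\ell(0)=L(\gamma)$ and $\ell'(0)=\int_\gamma k_g\ge l\,L(\gamma)$ one cannot conclude $\ell(t)\le L(\gamma)(\cos t+l\sin t)$; a \emph{lower} bound on the initial slope pushes the solution above the model, not below (e.g.\ $\ell(t)=L(\gamma)(\cos t+2l\sin t)$ satisfies $\ell''=-\ell$ and exceeds the model). Moreover, no repair is possible, because the unconditional area inequality is false: on the round sphere let $\gamma$ be a geodesic circle of small radius $\epsilon$ about the north pole and $N_1$ the small cap, so $k_g=\cot\epsilon\ge 1=l$, while $\mathrm{Area}(N_2)=2\pi(1+\cos\epsilon)\to 4\pi$ exceeds $\mathrm{Area}(B_{\cot^{-1}(-1)})=2\pi\bigl(1+\tfrac{1}{\sqrt 2}\bigr)$. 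The area statement is meaningful, and is proven in the paper, only in the equality case $L(\gamma)=2\pi/\sqrt{1+l^2}$: there Hang--Wang rigidity gives $N_1\cong B_{\pi-r}$, hence $\int_{N_1}K=\mathrm{Area}(B_{\pi-r})$, and Gauss--Bonnet on the topological sphere $N$ yields $\mathrm{Area}(N_2)\le\int_{N_2}K=4\pi-\mathrm{Area}(B_{\pi-r})=\mathrm{Area}(B_r)$, with equality forcing $K\equiv 1$ and hence $(N,g)=\mathbb S^2$. Your final rigidity paragraph inherits both defects, since it rests on the unproven Reilly equality analysis and on the invalid comparison; the paper instead reads all rigidity directly off Hang--Wang together with the displayed Gauss--Bonnet chain.
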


\begin{proof}
By \cite{hang2007rigidity} Theorem 3, we have $L(\gamma)\leq \frac{2\pi}{\sqrt{1+l^2}}$. The equality holds if and only if $(N_1,g)$ is isometric to the disk $B_{r'}\subset \mathbb S^2$, $r'=\cot ^{-1}(l)$.
 Therefore if $L(\gamma)=2\pi$, $(N_1,g)$ is isometric to the standard hemisphere. But then $k_g=0$, thus we can apply the same argument to $N_2$ to deduce that $(N,g)$ is $\mathbb S^2$.
 In general, if $L(\gamma)= \frac{2\pi}{\sqrt{1+l^2}}$, then by Gauss-Bonnet theorem, as $N$ is a topological sphere,
 \begin{equation*}
 \begin{split}
 \mathrm{Area}(B_{r})+\mathrm{Area}(B_{r'})=4\pi= \int_{N_2} K + \int_{N_1}K
 &=\int_{N_2} K + \mathrm{Area}(B_{r'})\\
 &\ge \mathrm{Area}(N_2)+\mathrm{Area}(B_{r'}).
 \end{split}
 \end{equation*}
So if $ \mathrm{Area}(N_2)= \mathrm{Area}(B_r)$, then $K=1$ on $N$ and so $(N,g)$ is $\mathbb S^2$.
 \end{proof}

\bibliographystyle{amsplain}

\begin{thebibliography}{10}


\bibitem{barros2008estimates}
Barros, A. and Bessa, G. P.
\newblock {\it Estimates of the first eigenvalue of minimal hypersurfaces of $\mathbb S^{n}$},
\newblock {arXiv preprint math/0410493}, 2004.

\bibitem{duff1952harmonic}
 Duff, G.F.D. and Spencer, D.C.
{\it Harmonic tensors on Riemannian manifolds with boundary},
The Annals of Mathematics
\textbf{56} (1), 128--156,  1952.

\bibitem{choi1983first}
 Choi, H.I. and Wang, A.N.
 {\it A first eigenvalue estimate for minimal hypersurfaces},
J. Differential Geometry
\textbf{18} (3), 559--562,  1983.

\bibitem{escobar1999isoperimetric}
Escobar, J.F.
{\it An isoperimetric inequality and the first Steklov eigenvalue},
Journal of Functional Analysis
\textbf{165} (1), 101--116,  1999.

\bibitem{fan2013steklov}
Fan, X-Q., Tam, L-F., and Yu C.
\newblock {\it Steklov eigenvalues on annulus},
\newblock {arXiv preprint arXiv:1310.7686}, 2013.

\bibitem{escobar1997geometry}
Escobar, J.F.
{\it The geometry of the first non-zero Stekloff eigenvalue},
Journal of Functional Analysis
\textbf{150} (2), 544--556,  1997.


\bibitem{fraser2011first}
Fraser, A. and Schoen, R.
\newblock {\it The first steklov eigenvalue, conformal geometry, and minimal
  surfaces},
\newblock {Advances in Mathematics}, \textbf{226}(5): 4011--4030, 2011.

\bibitem{fraser2012minimal}
Fraser, A. and Schoen, R.
\newblock \textit{Minimal surfaces and eigenvalue problems}, 
\newblock {Geometric Analysis, Mathematical Relativity, and Nonlinear
  Partial Differential Equations}, 599:105--121, 2012.

\bibitem{fraser2012sharp}
Fraser, A. and Schoen, R.
\newblock {\it Sharp eigenvalue bounds and minimal surfaces in the ball,}
\newblock {Inventiones Mathematicae}, pages 1--68, 2012.

\bibitem{gallot1975operateur}
 Gallot, S. and Meyer, D.
{\it Op{\'e}rateur de courbure et laplacien des formes diff{\'e}rentielles d'une vari{\'e}t{\'e} Riemannienne},
J. Math. Pures et Appl
\textbf{54} , 259--284,  1975.

\bibitem{hang2007rigidity}
 Hang, F. and Wang, X.
{\it A Rigidity Theorem for the Hemisphere},
arXiv preprint arXiv:0711.4595.

\bibitem{henrot2006extremum}
 Henrot, A.
{\it Extremum problems for eigenvalues of elliptic operators},
Birkhauser
(2006).

\bibitem{ilias2011reilly}
 Ilias, S. and Makhoul, O.
{\it A Reilly inequality for the first Steklov eigenvalue},
Differential Geometry and its Applications \textbf{29} (5), 699--708,  2011.

\bibitem{petersen1998riemannian}
 Petersen, P.
{\it Riemannian geometry}
 , Graduate Texts in Mathematics, (1998).

\bibitem{raulot2011reilly}
Raulot, S. and Savo, A.
{\it A Reilly formula and eigenvalue estimates for differential forms},
Journal of Geometric Analysis
\textbf{21} (3), 620--640,  2011.

\bibitem{raulot2012first}
Raulot, S. and Savo, A.
{\it On the first eigenvalue of the Dirichlet-to-Neumann operator on forms},
Journal of Functional Analysis
\textbf{262} (3), 889--914,  2012.

\bibitem{raulot2012spectrum}
Raulot, S. and Savo, A.
{\it On the spectrum of the Dirichlet-to-Neumann operator acting on forms of a Euclidean domain},
 Journal of Geometry and Physics \textbf{77}: 1-12, 2014.

\bibitem{reilly1977applications}
 Reilly, R.C.
{\it Applications of the Hessian operator in a Riemannian manifold},
Indiana Univ. Math. J.
\textbf{26}, 459--472,  1977.

\bibitem{ros1987compact}
 Ros, A.
{\it Compact hypersurfaces with constant higher order mean curvatures},
Revista Matem{\'a}tica Iberoamericana
\textbf{3} (3), 447--453, 1987.

\bibitem{schwarz1995hodge}
Schwarz, G.
{\it Hodge decomposition: a method for solving boundary value problems},
Springer Lecture Notes in Mathematics, (1995).

\bibitem{toponogov1959evaluation}
Toponogov, V.
{\it Evaluation of the length of a closed geodesic on a convex surface. }(Russian)
Dokl. Akad. Nauk SSSR
\textbf{124}, 282--284,  1959.

\bibitem{wang2009sharp}
 Wang, Q. and Xia, C.
{\it Sharp bounds for the first non-zero Stekloff eigenvalues},
Journal of Functional Analysis
\textbf{257} (8), 2635--2644,  2009.

\bibitem{warner1971foundations}
Warner, F.W.
{\it Foundations of differentiable manifolds and Lie groups},
 Springer Verlag,  (1971).

\bibitem{xia1997rigidity}
 Xia, C.
{\it Rigidity of compact manifolds with boundary and nonnegative Ricci curvature},
Proceedings of the American Mathematical Society
\textbf{125}, 1801--1806,  1997.

\bibitem{yang1980eigenvalues}
 Yang, P. and Yau, S. T.
{\it Eigenvalues of the Laplacian of compact Riemann surfaces and minimal submanifolds }
Ann. Scuola Norm. Sup. Pisa Cl. Sci.(4)
\textbf{7} (1), 55--63,  1980.

\bibitem{yau1982seminar}
 Yau, S. T.
{\it Seminar on differential geometry},
Ann. of Math. Studies
(102), 669--706,  1982.

\end{thebibliography}

\end{document}